\newtheorem{prop}{Proposition}[section]
\newtheorem{lem}[prop]{Lemma}
\newtheorem{cor}[prop]{Corollary}
\newtheorem{thm}[prop]{Theorem}
\theoremstyle{definition}
\newtheorem{rem}[prop]{Remark}
\newtheorem{defi}[prop]{Definition}
\newtheorem{ex}[prop]{Example}
\newtheorem*{KN}{Crazy Knight's Tour Problem}
\def\Z{\mathbb{Z}}
\numberwithin{equation}{section}
\def\Z{\mathbb{Z}}
\newcommand{\probname}{Crazy Knight's Tour Problem}
\def\H{\mathrm{H}}
\def\W{\mathrm{W}}
\def\E{\mathcal{E}}
\def\R{\mathcal{R}}
\def\C{\mathcal{C}}
\DeclarePairedDelimiter\floor{\lfloor}{\rfloor}
\begin{document}

\title[Weak Heffter arrays]{Weak Heffter Arrays and biembedding graphs on non-orientable surfaces}

\author[S. Costa]{Simone Costa}
\address{DICATAM - Sez. Matematica, Universit\`a degli Studi di Brescia, Via
Branze 43, I-25123 Brescia, Italy}
\email{simone.costa@unibs.it}

\author[L. Mella]{Lorenzo Mella}
\address{Dip. di Scienze Fisiche, Informatiche, Matematiche, Universit\`a degli Studi di Modena e Reggio Emilia, Via Campi 213/A, I-41125 Modena, Italy}
\email{lorenzo.mella@unipr.it}

\author[A. Pasotti]{Anita Pasotti}
\address{DICATAM - Sez. Matematica, Universit\`a degli Studi di Brescia, Via
Branze 43, I-25123 Brescia, Italy}
\email{anita.pasotti@unibs.it}

\keywords{Heffter array, biembedding}
\subjclass[2020]{05B20; 05B30; 05C10}

\maketitle

\begin{abstract}
In 2015, Archdeacon proposed the notion of Heffter arrays in view of its connection to several other combinatorial objects.
In the same paper he also presented the following variant.
A \emph{weak Heffter array} $\W\H(m,n;h,k)$ is an $m \times n$ matrix $A$  such that:
 each row contains $h$ filled cells and each column contains $k$ filled cells;
 for every $x \in \mathbb{Z}_{2nk+1} \setminus \{0\}$, there is exactly one cell of $A$ whose element is one of the following:
$x,-x,\pm x,\mp x$, where the upper sign on $\pm$ or $\mp$ is the row sign  and
the lower sign is the column sign;
 the elements in every row and column (with the corresponding sign) sum  to $0$ in $\mathbb{Z}_{2nk+1}$.
Also the ``weak concept'', as the classical one, is related to several other topics, such as difference families, cycle systems
and biembeddings.

Many papers on Heffter arrays have been published, while no one on weak Heffter arrays has been written. This is the first one and here we explore necessary conditions, existence and non-existence results, and connections to biembeddings into non-orientable surfaces.
\end{abstract}

\section{Introduction}
In 2015, Archdeacon \cite{A} introduced a class of partially filled (p.f. for short) arrays as an interesting link between combinatorial designs and topological graph theory.  Since then, there has been a good deal of interest in Heffter arrays as well as in related topics such as the sequencing of subsets of a group, biembeddings of cycle systems on a surface, and orthogonal cycle systems.

\begin{defi}\label{def:Heffter}\cite{A}
 A \emph{Heffter array} $\H(m,n;h,k)$ is an $m \times n$ matrix with entries from $\mathbb{Z}_{2nk+1}$ such that:
\begin{itemize}
\item[{\rm (a)}] each row contains $h$ filled cells and each column contains $k$ filled cells;
\item[{\rm (b)}]  for every $x \in \mathbb{Z}_{2nk+1} \setminus \{0\}$, either $x$ or $-x$ appears in the array;
\item[{\rm (c)}] the elements in every row and column sum to $0$ in $\mathbb{Z}_{2nk+1}$.
\end{itemize}
\end{defi}
To date several variants and generalizations of this concept have been introduced (see \cite{DP}), so
we will refer to these arrays as \emph{classical}.

In \cite{A}, Archdeacon also proposed the following variant of the previous concept.

\begin{defi}\label{def:weak}
 A \emph{weak Heffter array} $\W\H(m,n;h,k)$ is an $m \times n$ matrix $A$  such that:
\begin{itemize}
  \item[$(\rm{a_1})$] each row contains $h$ filled cells and each column contains $k$ filled cells;
\item[$(\rm{b_1})$]  for every $x \in \mathbb{Z}_{2nk+1} \setminus \{0\}$, there is exactly one cell of $A$ whose element is one of the following:
$x,-x,\pm x,\mp x$, where the upper sign on $\pm$ or $\mp$ is the row sign  and
the lower sign is the column sign;
\item[$(\rm{c_1})$] the elements in every row and column (with the corresponding sign) sum  to $0$ in $\mathbb{Z}_{2nk+1}$.
\end{itemize}
\end{defi}

If $m=n$, then $h=k$ and we use the notation $\H(n;k)$ instead of $\H(n,n;k,k)$.
Furthermore, a rectangular array with no empty cells $\H(m,n;n,m)$ is denoted by $\H(m,n)$.
Analogous notation holds for a weak Heffter array.

A (weak) Heffter array is said to be \emph{integer} if condition {$\rm (c)$} of Definition \ref{def:Heffter}
(condition {$\rm (c_1)$} of Definition \ref{def:weak})
 is strengthened so that the elements in every row and every column, seen as integers in $\pm \{1,\ldots,nk\}$, sum to zero in $\Z$.

\begin{ex}\label{ex:WeakA}
The following is a weak Heffter array $\W\H(3,4)$ over $\mathbb Z_{25}$ (see Figure 6 of \cite{A}):
$$
\begin{array}{|c|c|c|c|} \hline
1 & -7 & -6 & 12 \\ \hline
2 & -4 & \pm 10 & \mp 8 \\ \hline
-3 & \mp 11 & \pm 9 & 5 \\ \hline
\end{array}
$$
\end{ex}

The existence of classical Heffter arrays has been largely investigated. The most important results are the following.
\begin{thm}\label{thm:Heffter}{\rm \cite{ADDY,CDDY,DW}}
There exists an $\H(n; k)$ if and only if $n \geq  k \geq  3$.
\end{thm}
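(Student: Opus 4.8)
The plan is to prove the two implications separately, with the forward (construction) direction being by far the harder one.

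For necessity I would argue by simple counting and by inspecting the small cases. Each of the $n$ rows carries $k$ filled cells, so the array has exactly $nk$ filled cells, which matches the $nk$ pairs $\{x,-x\}$ with $x\in\{1,\dots,nk\}$ that condition (b) forces the support to represent; in particular a row can hold at most $n$ filled cells, giving $k\le n$ at once. To rule out $k\in\{1,2\}$ I would use condition (c) against condition (b): if $k=1$ then a row is a single entry that must be $\equiv 0$, impossible since the support avoids $0$; if $k=2$ then a row $\{a,b\}$ with $a+b\equiv 0$ forces $b=-a$, so both $a$ and $-a$ occur in $A$, contradicting (b). Hence $n\ge k\ge 3$ is necessary.

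For sufficiency the strategy is to exhibit explicit arrays, and the first move is to fix the \emph{shape}, i.e.\ the set of filled cells, so that condition ($a$) becomes free. I would place the $k$ filled cells of row $i$ on the $k$ cyclically consecutive diagonals, namely in columns $i,i+1,\dots,i+k-1$ read modulo $n$. This cyclically diagonal support automatically yields exactly $k$ filled cells in every row and every column, so the entire problem collapses to assigning signed representatives of the pairs $\{x,-x\}$ to these cells so that (b) and (c) hold. Wherever possible I would realise the array as an \emph{integer} Heffter array (row and column sums equal to $0$ in $\Z$, with entries in $\pm\{1,\dots,nk\}$), since any such array is \emph{a fortiori} an $\H(n;k)$ over $\Z_{2nk+1}$; the existence of integer versions is governed by congruences of $n$ and $k$ modulo $4$, so I would organise the construction into residue classes of $k\bmod 4$, pairing the $k$ entries of each line so that rows and columns balance independently when $k\equiv 0\pmod 4$ and inserting a small correction block in the odd cases.

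The main obstacle is precisely the simultaneous balancing: one must choose one representative per pair \emph{and} a placement so that all $n$ row-sums and all $n$ column-sums vanish together, and these $2n$ conditions interact nontrivially, especially when $n$ and $k$ are both odd. I would manage this in two ways. First, I would build the large arrays from \emph{shiftable} blocks — sub-arrays whose nonzero entries can be uniformly translated by a constant without disturbing the local sum conditions — so that $\H(n;k)$ for large $n$ is assembled by concatenation from a fixed stock of base cases verified directly (small $k$, or small $n-k$, together with the dense case $n=k$). Second, in the residual parameter classes where no integer array exists, I would exploit the slack of the modular condition: a line sum need only be $\equiv 0\pmod{2nk+1}$, so allowing a sum equal to $\pm(2nk+1)$ rather than $0$ repairs the parity that obstructs the integer construction. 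Concretely I would split the argument as: (i) settle the dense case $n=k$; (ii) settle a handful of base cases for each residue of $k\bmod 4$; (iii) extend to all admissible $(n,k)$ by shifting and concatenation, invoking the modular freedom only in the finitely many obstructed classes. This is the division of labour carried out across \cite{ADDY}, \cite{CDDY} and \cite{DW}, where the tight cases $n=k$ and $n=k+1$ absorb most of the difficulty.
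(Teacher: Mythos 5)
The paper does not prove this theorem; it is quoted as a known result from \cite{ADDY}, \cite{CDDY} and \cite{DW}, so there is no internal proof to compare against. Your necessity argument is complete and correct: $k\le n$ follows from the row length, $k=1$ is killed by $0\notin\Z_{2nk+1}\setminus\{0\}$, and $k=2$ is killed because a zero-sum pair $\{a,-a\}$ would violate condition (b) (equivalently, would repeat an absolute value in the support). This matches the standard counting argument in the literature.

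The sufficiency direction, however, is a roadmap rather than a proof. You correctly identify the architecture of the cited papers --- cyclically $k$-diagonal supports, integer arrays governed by the residues of $n$ and $k$ modulo $4$, shiftable blocks assembled by concatenation, and the use of non-integer (purely modular) arrays in the congruence classes where the integer version provably fails --- but none of the explicit entry assignments are exhibited, and those assignments are the entire mathematical content of the theorem. In particular, step (i) (the dense and near-dense cases) and the verification that the base blocks in each residue class actually satisfy the $2n$ simultaneous sum conditions are exactly where the three cited papers spend almost all of their effort; asserting that a ``small correction block'' exists in the odd cases is not a substitute for producing one. As submitted, the forward implication is a statement of intent that defers to \cite{ADDY}, \cite{CDDY} and \cite{DW} for every nontrivial step, which is acceptable only if the proof is meant as a citation (as the paper itself treats it), not as an independent argument.
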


\begin{thm}{\rm \cite{ABD}}\label{th:existence}
There exists an $\H(m,n)$ if and only if $m,n\geq 3$.
\end{thm}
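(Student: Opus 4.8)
The statement splits into an almost immediate necessity direction and a constructive sufficiency direction, and I would organize the proof accordingly. Recall that $\H(m,n)=\H(m,n;n,m)$ is fully filled, so each column is an $m$-term sum and each row an $n$-term sum, each forced to vanish in $\mathbb{Z}_{2mn+1}$, and the $mn$ entries realize the magnitudes $1,2,\dots,mn$ once each. For necessity, call a row or column a \emph{line}. A line of length $1$ must consist of a single entry equal to $0$, which is impossible since $0$ never appears; this rules out $m=1$ and $n=1$. A line of length $2$ consists of entries $a,b$ with $a+b\equiv 0$, so $b\equiv-a$; as $2mn+1$ is odd we have $a\not\equiv-a$ for $a\neq0$, so both representatives of the pair $\{a,-a\}$ occur, forcing (by the count of $mn$ cells against $mn$ pairs) some other pair to be unrepresented and contradicting condition (b). This rules out $m=2$ and $n=2$, so $m,n\ge 3$ is necessary.

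For sufficiency I would first trim the problem. Transposing an $\H(m,n)$ gives an $\H(n,m)$, so I may assume $m\le n$. Moreover the diagonal case $m=n$ is free: an $\H(n,n)$ is exactly an $\H(n;n)$, which exists for every $n\ge 3$ by Theorem \ref{thm:Heffter} (take $k=n$). Hence it remains to construct $\H(m,n)$ for all $3\le m<n$.

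The key structural remark organizing the constructions is a parity dichotomy. Writing $M=\tfrac{mn(mn+1)}{2}$ for the sum of all magnitudes and $T$ for the signed integer total, every row sum being $\equiv 0\pmod{2mn+1}$ forces $T\equiv 0\pmod{2mn+1}$, while $T\equiv M\pmod 2$. When $mn\equiv 0,3\pmod 4$ the quantity $M$ is even, and I would aim for an \emph{integer} array, with every line summing to $0$ over $\mathbb{Z}$. When $mn\equiv 1,2\pmod 4$ the quantity $M$ is odd, $T$ is forced to be a nonzero multiple of $2mn+1$, and no integer array can exist; here the signs must be chosen so that certain lines sum to $\pm(2mn+1)$ (or small multiples) while still vanishing in $\mathbb{Z}_{2mn+1}$. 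In both regimes I would combine a short list of explicit base arrays for small $m$ with a juxtaposition recursion: placing an $m\times n_1$ tight array on magnitudes $\{1,\dots,mn_1\}$ beside an $m\times n_2$ block realizing vanishing line sums on the shifted set $\{mn_1+1,\dots,m(n_1+n_2)\}$ yields an $m\times(n_1+n_2)$ array whose column conditions are inherited and whose row sums add, so that finitely many seeds reach every admissible $n$. In the integer regime this gluing is clean because zero integer sums are modulus-independent, and sporadic pairs left uncovered would be finished by hand.

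The main obstacle is the genuinely modular regime $mn\equiv 1,2\pmod 4$. There the pieces can no longer be combined purely over $\mathbb{Z}$: one must control exactly how many lines carry a $\pm(2mn+1)$ defect and pair these defects so that the \emph{columns}, not only the rows, close up modulo $2mn+1$ after juxtaposition, all while the magnitude set stays an interval and the sign pattern remains periodic in $n$. Producing the shifted building blocks and verifying simultaneously the row and column conditions across every residue class of $(m,n)$ — together with clearing the finitely many exceptional small cases — is where essentially all the difficulty lies.
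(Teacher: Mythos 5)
The paper offers no proof of this statement: it is quoted directly from \cite{ABD}, so your attempt can only be measured against what that theorem actually requires. Your necessity argument is complete and correct: a line of length $1$ would force a $0$ entry, and a line of length $2$ would force both members of some pair $\{x,-x\}$ to appear (note $x\neq -x$ since the modulus $2mn+1$ is odd), whence counting the $mn$ cells against the $mn$ pairs shows another pair is unrepresented, violating condition (b) of Definition \ref{def:Heffter}. The reduction of sufficiency to $3\le m<n$ via transposition and Theorem \ref{thm:Heffter}, and the parity dichotomy showing integer arrays can exist only when $mn\equiv 0,3\pmod 4$, are both sound and do reflect the organizing principles of \cite{ABD}.

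The genuine gap is that the constructive half remains a program rather than a proof. You exhibit no base array for any pair with $m<n$, do not specify which seeds your recursion needs, do not construct the shifted $m\times n_2$ blocks with vanishing line sums on $\{mn_1+1,\dots,m(n_1+n_2)\}$ whose existence the juxtaposition step presupposes, and in the regime $mn\equiv 1,2\pmod 4$ you concede outright that pairing the $\pm(2mn+1)$ defects so that both rows and columns close up after gluing ``is where essentially all the difficulty lies'' --- without resolving it. Since the theorem is precisely the assertion that such constructions exist for every $m,n\ge 3$, deferring them is deferring the theorem itself. What you have is a correct proof of necessity together with an accurate road map of the strategy of \cite{ABD}; to become a proof, the sufficiency direction needs the explicit families of arrays, or at least a fully specified recursive scheme with all base cases displayed and verified, covering every congruence class of $(m,n)$.
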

Recent partial results for integer rectangular Heffter arrays with empty cells can be found in \cite{MP3}.

On the other hand, no result has been obtained about weak Heffter arrays, actually, this
is the first paper studying this class of arrays.
As shown into details in Section \ref{sec:2}, one can get a weak Heffter array starting from a classical one
simply by placing different row and column signs in a suitable set of cells, see for instance, Example
\ref{ex:WeakFromClass}. Clearly, not all weak Heffter arrays can be constructed starting from a classical one.
Indeed the one proposed by Archdeacon, see Example \ref{ex:WeakA},
that as far as we know is the only weak Heffter array considered, cannot be obtained from a classical $\H(3,4)$
by placing different row and column signs in a suitable set of cells.
This induces us to introduce the following definition.
\begin{defi}
A weak $\W\H(m,n;h,k)$ is said to be \emph{strictly weak} if it
cannot be obtained from a classical $\H(m,n;h,k)$
by placing different row and column signs in a suitable set of cells.
\end{defi}
In other words, a weak Heffter array $A=(a_{ij})$ is strictly weak if there is no Heffter array $B=(b_{ij})$
with the same parameters such that $|a_{ij}|=|b_{ij}|$, for any $i,j$. Hence it is easy to see that the
$\W\H(3,4)$ of Example \ref{ex:WeakA} is strictly weak.
The terminology \emph{strictly weak} may induce the reader to believe it is easier to construct
these arrays than the classical ones. However, as shown in Section  \ref{sec:3}, this is not the case.
Notably, for some values of $n$ and $k$, an $\H(n;k)$ exists while a strictly weak $\W\H(n;k)$ does not exist.
The motivation for which Archdeacon introduced this variant and for which we believe it is worth studying
is that, as shown in Section 6.1 of \cite{DP}, also these arrays give rise to orthogonal cyclic cycle systems
and, as hinted in \cite{A} and explained in details in Section  \ref{sec:5}, starting from these arrays the existence of embeddings follows.

In Section \ref{sec:2} we investigate the connection between classical and (not strictly) weak Heffter arrays,
showing how some existence results on the classical case apply to this context.
In Sections \ref{sec:3} and \ref{sec:4} we consider and adjust to the weak context a more general concept, introduced in \cite{CMPP}, which admits classical Heffter arrays as a particular case: the \emph{relative} Heffter arrays.
\begin{defi}\label{def:relative}
Let $v=2nk+t$ be a positive integer,
where $t$ divides $2nk$,  and
let $J$ be the subgroup of $\Z_{v}$ of order $t$.
 A  \emph{Heffter array $A$ over $\Z_{v}$ relative to $J$}, denoted by $\H_t(m,n; h,k)$, is an $m\times n$ array
 with elements in $\Z_{v}\setminus J$ such that:
\begin{itemize}
\item[$(\rm{a_2})$] each row contains $h$ filled cells and each column contains $k$ filled cells;
\item[$(\rm{b_2})$] for every $x \in \mathbb{Z}_v \setminus J$, either $x$ or $-x$ appears in the array;
\item[$(\rm{c_2})$] the elements in every row and column sum to $0$ in $\Z_v$.
\end{itemize}
\end{defi}
Analogously, one can define a weak relative Heffter array $\W\H_t(m,n;h,k)$, it is sufficient to replace condition $(\rm{b_2})$ of Definition \ref{def:relative}
with the following one:
\begin{itemize}
\item[$(\rm{b_3})$]  for every $x \in \mathbb{Z}_{v} \setminus J$, there is exactly one cell of $A$ whose element is one of the following:
$x,-x,\pm x,\mp x$, where the upper sign on $\pm$ or $\mp$ is the row sign  and
the lower sign is the column sign.
\end{itemize}
Clearly, if $t=1$ we find again the original concepts introduced in \cite{A}. For the square case we use the notation $\H_t(n;k)$ and $\W\H_t(n;k)$.

In details, \color{black} in Section \ref{sec:3} we determine some necessary conditions for the existence of a weak (relative) Heffter array
and then we present some non-existence results. Furthermore, in Propositions \ref{prop:t33} and \ref{prop:t43}
we show that for some values of $t,n$ and $k$ there exists an $\H_t(n;k)$ while no strictly weak $\W\H_t(n;k)$ exists.
In Section \ref{sec:4} we construct an infinite class of strictly weak relative Heffter arrays for some values of the parameters
for which the existence of the corresponding relative Heffter array was left open in \cite{CMPP}. In Sections \ref{sec:5} and \ref{sec:6r}, we investigate the relation between weak Heffter arrays and embeddings. 
Indeed while the connection between classical Heffter arrays and embeddings into orientable surfaces has been considered by several papers (see, for instance, \cite{CDY, CMPPHeffter, CPPBiembeddings, DM}), the one between weak Heffter arrays and embeddings has been only marginally investigated in \cite{A}. Here, in Section \ref{sec:5} we provide a formal definition of Archdeacon embedding into non-necessarily orientable surfaces. Then, in Section \ref{sec:6r}, we  present an infinite family of non-orientable embeddings of Archdeacon type.
In the last section we conclude with some remarks and we present an open problem.

\section{Connection between weak and classical Heffter arrays}\label{sec:2}
As suggested by the terminology, a weak Heffter array is an Heffter array in which a property is relaxed.
So we believe it is quite natural to ask if it is possible to construct a weak Heffter array starting from a classical one.
Clearly, given a Heffter array $A$ if we replace all the elements $a$ of $A$ with $\pm a$
we trivially get a weak Heffter array.
It is also easy to see that given a set $\mathcal{R}$ of rows of $A$ if we  replace each element $a$ of  $\mathcal{R}$  with $\mp a$  we get a weak Heffter array. Obviously a similar reasoning can be done on the columns.

In what follows, given an array $A$, we respectively denote by $R_i$ and $C_j$ the $i$-th row and the $j$-th column of $A$. Then, by \textit{support of $A$} we mean the set of the absolute values of the elements contained in $A$, that is $supp(A) = \{|a|: a \in A\}$.
We point out that in this section, with a little abuse of notation, we identify a row (column) of a (weak) $\H(m,n;h,k)$
with the $h$-subset ($k$-subset) of $\Z_{2nk+1}$ whose elements are those of the given row (column).

The following result shows a way to get a weak Heffter array, starting from a classical one, having few cells with different row and column signs.

\begin{prop}\label{prop:weak_from_class}
If there exists an $\H(m,n;h,k)$ with a row or a column containing a proper subset whose sum is zero (modulo
$2nk+1$), then there exists a $\W\H(m,n;h,k)$ whose number of cells containing an element with different row and column signs
is at most equal to \rm{max}$\left\{\left\lfloor\frac{h}{2}\right\rfloor, \left\lfloor\frac{k}{2}\right\rfloor\right\}$.
\end{prop}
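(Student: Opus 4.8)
The plan is to modify the given classical array $A$ by changing signs only, so that condition $(\rm{b_1})$ is inherited for free and only the balance condition $(\rm{c_1})$ has to be re-checked. The observation I would isolate first is an asymmetry in how a sign change propagates through the $m+n$ linear constraints: if in a single cell I flip \emph{only} the row sign (turning a matching-sign entry into a $\pm$- or $\mp$-entry of the same absolute value), then the column contribution of that cell is untouched, so every column sum is preserved and only the sum of the row containing that cell is disturbed. Dually, flipping only a column sign preserves every row sum. This decoupling is exactly what lets a purely local modification respect all the row and column equations simultaneously.

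Next I would exploit the hypothesis. Suppose first that a row $R_i$ contains a proper, nonempty subset $S$ with $\sum_{x\in S}x=0$ in $\Z_{2nk+1}$. Since the whole row sums to $0$, the complement $R_i\setminus S$ is a nonempty subset with zero sum as well. I would then flip the row sign of every entry in the smaller of the two sets $T\in\{S,\,R_i\setminus S\}$, turning each such matching-sign entry into a $\pm$- or $\mp$-entry; this negates the row contribution of each cell of $T$ while leaving its column contribution fixed. By the observation above every column sum and every other row sum is unchanged, while the sum of $R_i$ changes by $-2\sum_{x\in T}x=0$, so $(\rm{c_1})$ still holds, and $(\rm{b_1})$ is immediate because the absolute values are untouched. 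The number of cells with different row and column signs is exactly $|T|\le\lfloor h/2\rfloor$.

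The case in which a \emph{column} $C_j$ carries the zero-sum proper subset is handled symmetrically: I would flip the \emph{column} sign on the smaller of $S$ and $C_j\setminus S$, which now preserves all row sums and all other column sums and alters the $C_j$ sum by twice a zero sum, producing at most $\lfloor k/2\rfloor$ differing cells. Combining the two cases gives the stated bound $\max\{\lfloor h/2\rfloor,\lfloor k/2\rfloor\}$.

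I do not anticipate a serious obstacle; essentially all the content sits in the first paragraph's remark that a one-sided sign flip separates the row constraints from the column constraints. The two points deserving a little care are that passing to the complement is legitimate (this is what allows one to always land in the smaller half and obtain the floor bound rather than $h-1$ or $k-1$), and that $(\rm{b_1})$ is genuinely preserved, i.e.\ relabelling entries as $\pm a$ or $\mp a$ does not disturb the ``exactly one cell per value'' count, which is clear since the underlying multiset of absolute values of $A$ is left intact.
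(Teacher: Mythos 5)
Your argument is correct and is essentially the paper's own proof: pass to the smaller of the zero-sum subset and its complement, flip only the row (resp.\ column) sign on those cells so that the row sum changes by twice a zero sum while all column (resp.\ row) sums are untouched, and observe that the support, and hence condition $(\rm{b_1})$, is preserved. No meaningful difference in approach.
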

\begin{proof}
Let $A$ be an array as in the statement.
It is not restrictive to reason on a row. So let $S$ be a proper subset of a row $R$ of $A$
summing up to zero. Since every row sums to zero also the elements of $R\setminus S$ sum to zero,
hence we can suppose that $|S|\leq \left\lfloor\frac{h}{2}\right\rfloor$.
 Now we replace each element in $S$ by $\mp s$, and
we leave unchanged all the other elements in $A$. Call $B$ the new array.
Clearly, $supp(A)=supp(B)$ and all the rows of $B$ different from $R$ are nothing but the rows of $A$,
hence their sums are zero.
About $R$, the sum is still zero since we have changed the signs of the elements of a subset of $R$ whose sum is zero.
Finally, note that the column signs are not changed.
So the columns of $B$ sum to $0$.
\end{proof}

\begin{ex}\label{ex:WeakFromClass}
Consider the following $\H(8;6)$, taken from Example 1.4 of \cite{ADDY}:
$$ \begin{array}{|r|r|r|r|r|r|r|r|} \hline
-1 & 5 & 2 & -7 & -9 & 10 & & \\ \hline
3 & -4 & -6 & 8 & 11 & -12 & & \\ \hline
& & -13 & 17 & 14 & -19 & -21 & 22 \\ \hline
& & 15 & -16 & -18 & 20 & 23 & -24 \\ \hline
-33 & 34 & & & -25 & 29 & 26 & -31 \\ \hline
35 & -36 & & & 27 & -28 & -30 & 32 \\ \hline
38 & -43 & -45 & 46 & & & -37 & 41 \\ \hline
-42 & 44 & 47 & -48 & & & 39 & -40 \\ \hline
\end{array}
$$
Note that, for instance, the proper subset $\{-1,-9,10\}$ of the first row sums to $0$. Then, by applying the proof of the previous proposition we have the following $\W\H(8;6)$:
$$ \begin{array}{|r|r|r|r|r|r|r|r|} \hline
\bf{\pm1}&5&2&-7&\bf{\pm9}&\bf{\mp10} & & \\ \hline
3 & -4 & -6 & 8 & 11 & -12 & & \\ \hline
& & -13 & 17 & 14 & -19 & -21 & 22 \\ \hline
& & 15 & -16 & -18 & 20 & 23 & -24 \\ \hline
-33 & 34 & & & -25 & 29 & 26 & -31 \\ \hline
35 & -36 & & & 27 & -28 & -30 & 32 \\ \hline
38 & -43 & -45 & 46 & & & -37 & 41 \\ \hline
-42 & 44 & 47 & -48 & & & 39 & -40 \\ \hline
\end{array}
$$
\end{ex}

We believe that it is natural to ask if some of the classical Heffter arrays have the property of Proposition \ref{prop:weak_from_class}.
First of all note that since a row (column) of an $\H(m,n;h,k)$ does not contain $0$ nor $2$-subsets of the form $\{x,-x\}$, if the property is satisfied by a row (column) then $h\geq 6$ ($k\geq 6$).
We recall that in all the constructions of square integer Heffter arrays, see \cite{ADDY,DW}, the authors obtain an $\H(n;k+4)$ starting from an $\H(n;k)$
by adding to each row and each column four elements having sum zero. So the condition required by Proposition \ref{prop:weak_from_class}
is trivially satisfied. Hence we get the following.
\begin{cor}
For every $n> k\geq 3$, there exists an integer $\W\H(n;k)$ with exactly $\ell$ cells containing an element with different row and column signs, where
\begin{itemize}
\item[(1)] $\ell=3$ if $k\equiv 3 \pmod 4$ and $n\equiv 0,1\pmod 4$;
\item[(2)] $\ell=4$ if $k\equiv 0 \pmod 4$;
\item[(3)] $\ell=4$ if $k\equiv 1 \pmod 4$, $k\neq 5$ and $n\equiv 0,3\pmod 4$;
\item[(4)] $\ell=4$ if $k\equiv 2 \pmod 4$, $k\neq 6$ and $n$ is even;
\end{itemize}
\end{cor}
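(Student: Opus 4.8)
The plan is to extract the required zero-sum subsets directly from the layered constructions of integer square Heffter arrays in \cite{ADDY,DW} and then feed them into Proposition \ref{prop:weak_from_class}. As recalled just before the statement, those constructions produce an integer $\H(n;k)$ from a base array $\H(n;k_0)$, where $k_0\in\{3,4,5,6\}$ is the smallest admissible value with $k_0\equiv k\pmod 4$, by repeatedly adjoining to each row and each column a block of four entries whose sum is zero. In every case the stated hypotheses on $n$ guarantee $nk_0\equiv 0,3\pmod 4$, hence the existence of the integer base $\H(n;k_0)$; since adjoining zero-sum $4$-blocks preserves both the integer condition and existence, an integer $\H(n;k)$ is available throughout. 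Crucially, because each adjoined block sums to zero, the $k_0$ entries inherited from the base survive in every row as a zero-sum subset, and each adjoined block is itself a zero-sum $4$-subset.

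First I would treat case (1), where $k_0=3$. For $k\geq 7$ the three base entries form a \emph{proper} zero-sum $3$-subset $S$ of each row, so applying the sign change of Proposition \ref{prop:weak_from_class} to exactly these three cells (replacing each $s\in S$ by $\mp s$) yields a weak array with precisely $\ell=3$ cells of mixed sign; the integer property persists because negating a zero-sum integer subset leaves the row sum at zero and leaves every column sum untouched. For the single boundary value $k=3$ a row has no proper zero-sum subset, so instead I would flip the signs of a whole row to $\mp$, which changes exactly the $3$ cells of that row and again preserves all row and column sums.

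Cases (2)--(4), where $\ell=4$, follow the same pattern with a zero-sum $4$-subset in place of a $3$-subset. In case (2), with $k_0=4$, the four base entries give a proper zero-sum $4$-subset as soon as $k\geq 8$, while for $k=4$ I would flip an entire row; either way exactly $4$ cells acquire mixed signs. In cases (3) and (4) the hypotheses $k\neq 5$ and $k\neq 6$ force $k\geq 9$ and $k\geq 10$ respectively, so at least one $4$-block has been adjoined to each row; this block is a proper zero-sum $4$-subset, and flipping its four cells delivers exactly $\ell=4$. In all instances the output is an integer weak Heffter array, exactly as in case (1).

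The step I expect to require the most care, rather than routine verification, is matching the target value of $\ell$ to a subset of exactly that size. For $\ell=3$ this is why case (1) is restricted to $k\equiv 3$: only then is the base $k_0=3$, reliably providing a triple. For $\ell=4$ it explains the exclusions $k=5$ and $k=6$, which I would justify by noting that \emph{no} row of those sizes can contain a zero-sum $4$-subset at all: in a row of five entries a zero-sum $4$-subset would force the fifth entry to equal $0$, and in a row of six entries it would force the remaining two entries to form a forbidden pair $\{x,-x\}$. Thus for these two parameters $\ell=4$ is genuinely unattainable by this method, so the hypotheses $k\neq 5,6$ are necessary and not mere artefacts of the construction.
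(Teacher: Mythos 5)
Your proposal is correct and follows essentially the same route as the paper: both arguments rest on the fact that the constructions in \cite{ADDY,DW} build an integer $\H(n;k)$ from a base $\H(n;k_0)$ by adjoining zero-sum $4$-blocks to every row and column, so each row contains a zero-sum subset of size $\ell$ to which Proposition \ref{prop:weak_from_class} (or, when the subset is the whole row, the row-flip observation from the start of Section \ref{sec:2}) can be applied. Your additional justification of the exclusions $k\neq 5,6$ and your explicit handling of the non-proper boundary cases $k=k_0$ are details the paper leaves implicit, but they do not change the method.
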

\begin{proof}
In each case there exists an integer Heffter array having a row or a column containing a (not necessarily proper) subset 
of size $\ell$ whose sum is zero.
Hence the result follows by Proposition \ref{prop:weak_from_class}.
In the following table we summarize where the results on
classical Heffter arrays having the above property have been obtained. Note $n$ and $k$
represent congruence classes modulo $4$.
 \begin{center}
\begin{tabular} {|c||c|c|c|c|} \hline
$n\backslash k$&          0&                   1&          2&3            \\ \hline\hline
0              &Theorem 2.1 \cite{ADDY}& Theorem 4.3 \cite{DW}& Theorem 2.1 \cite{ADDY}& Theorem 3.11 \cite{ADDY} \\ \hline
1              &Corollary 2.4 \cite{ADDY}&               &    & Theorem 3.5 \cite{ADDY} \\ \hline
2              &Theorem 2.1 \cite{ADDY}&                & Theorem 2.1\cite{ADDY}&    \\ \hline
3              &Corollary 2.4 \cite{ADDY}& Theorem 3.3 \cite{DW}&      &      \\ \hline
\end{tabular}
\end{center}
\label{table}

\end{proof}

\begin{rem}
In the cases not considered in the previous table
 by the constructions it is not immediate to understand
if there exists a row or a column of the array containing a proper subset summing up to zero.
Hence one should check all the constructions into details, but this is not aim of this paper.
\end{rem}

\begin{rem}\label{rem:WeakFromClass}
Given a Heffter array $A$, let $\mathcal{S}$ be a set of rows and columns of $A$. For every row $R_i\in \mathcal{S}$, we replace each element $r \in R_i$ with $\mp r$ and for every column $C_j\in \mathcal{S}$, we replace each element $c \in C_j$ with $\pm c$. Note that, if an element $x$ belong to both a row $R_i$ and a column $C_j$ of $\mathcal{S}$, we replace it with $\pm(\mp x)=-x$. 
Then the result is a weak Heffter array. Clearly, the set $\mathcal{S}$ may contain no rows or no columns.
\end{rem}

\begin{ex}
Let $A$ be the $\H(8;6)$ of Example \ref{ex:WeakFromClass} and
set $\mathcal{S}=\{R_3,R_4,C_6\}$.
Following previous remark, starting from $A$ we get the following $\W\H(8;6)$.
$$ \begin{array}{|r|r|r|r|r|r|r|r|} \hline
-1 & 5 & 2 & -7 & -9 & \bf{\pm10} & & \\ \hline
3 & -4 & -6 & 8 & 11 & \bf{\mp12} & & \\ \hline
& & \bf{\pm13} & \bf{\mp17} & \bf{\mp14} & \bf{19} & \bf{\pm21} & \bf{\mp22} \\ \hline
& & \bf{\mp15} & \bf{\pm16} & \bf{\pm18} & \bf{-20} & \bf{\mp23} & \bf{\pm24} \\ \hline
-33 & 34 & & & -25 & \bf{\pm29} & 26 & -31 \\ \hline
35 & -36 & & & 27 & \bf{\mp28} & -30 & 32 \\ \hline
38 & -43 & -45 & 46 & & & -37 & 41 \\ \hline
-42 & 44 & 47 & -48 & & & 39 & -40 \\ \hline
\end{array}
$$
\end{ex}

It is really easy to see that one can apply on the same array Proposition \ref{prop:weak_from_class} and Remark \ref{rem:WeakFromClass};
for this reason we believe it is not necessary to write all the details, hence we only present this idea in the following example.

\begin{ex}
Let again $A$ be the $\H(8;6)$ of Example \ref{ex:WeakFromClass}.
Note that $S=\{11,14,-25\}$ is a subset of $C_5$ whose sum is zero.
Set $\mathcal{S}=\{R_7,C_6\}$.
Starting from $A$ we get the following $\W\H(8;6)$.
$$ \begin{array}{|r|r|r|r|r|r|r|r|} \hline
-1 & 5 & 2 & -7 & -9 & \bf{\pm10} & & \\ \hline
3 & -4 & -6 & 8 & \bf{\pm11} & \bf{\mp12} & & \\ \hline
& & -13 & 17 & \bf{\pm14} & \bf{\mp19} & -21 & 22 \\ \hline
& & 15 & -16 & -18 & \bf{\pm20} & 23 & -24 \\ \hline
-33 & 34 & & & \bf{\mp25} & \bf{\pm29} & 26 & -31 \\ \hline
35 & -36 & & & 27 & \bf{\mp28} & -30 & 32 \\ \hline
\bf{\mp38} & \bf{\pm43} & \bf{\pm45} & \bf{\mp46} & & & \bf{\pm37} & \bf{\mp41} \\ \hline
-42 & 44 & 47 & -48 & & & 39 & -40 \\ \hline
\end{array}
$$
\end{ex}

Clearly all the arrays constructed with one of the techniques illustrated in this section are, by definition, not strictly weak.

\section{Necessary conditions for weak Heffter arrays and non-existence results}\label{sec:3}

In this section we present some non-existence results on (strictly) weak Heffter arrays.
It is important to underline that
the existence of a strictly weak Heffter array is not always granted. In fact, it seems that, despite having an higher degree of freedom in the choice of the signs of the elements in the array, strictly weak Heffter arrays are as difficult to find as classical ones.

Furthermore, it may happen that there exists a Heffter array but not a strictly weak Heffter array with the same parameters,
as shown in the following remark.
\begin{rem}\label{rem:n34}
By Theorem \ref{thm:Heffter} an $\H(n;k)$ exists for any $n\geq k \geq 3$. On the other hand we have checked, with the aid of a computer,
that no strictly $\W\H(n;3)$ exists when $n= 3,4$.
\end{rem}
\begin{rem}\label{rem:weakwrtclassical}
We point out that if there is no weak Heffter array this means there is also no classical or strictly weak Heffter array with the same parameters.
While if  there is no \emph{strictly} weak Heffter array, this gives no information about the existence of a
classical or weak Heffter array with the same parameters.
\end{rem}

We start with a preliminary consideration.
\begin{prop}
A weak Heffter array $\W\H(m,n;h,k)$ has at least $3$ cells containing distinct row and column signs.
\end{prop}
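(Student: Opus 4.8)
The plan is to distill a single linear relation satisfied by the \emph{mixed cells} (those whose entry is of type $\pm x$ or $\mp x$, i.e.\ with distinct row and column signs) by combining \emph{all} the row and column conditions at once, and then to exclude the possibilities of exactly one or exactly two such cells by a size estimate in the symmetric representation of $\Z_{2nk+1}$.

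First I would add up condition $(\rm{c_1})$ over all the rows and, separately, over all the columns. Each of these grand totals vanishes in $\Z_{2nk+1}$, hence so does their difference. A plain cell (entry $x$ or $-x$) contributes the same signed value to its row total and to its column total, so it contributes $0$ to the difference; a cell of type $\pm a$ contributes $+a$ to the row total and $-a$ to the column total, while a cell of type $\mp a$ contributes $-a$ and $+a$. Writing $a_c$ for the absolute value of a mixed cell $c$ and separating the $\pm$-cells from the $\mp$-cells, the difference equals $2T$, where $T=\sum_{\pm}a_c-\sum_{\mp}a_c$. Since $2nk+1$ is odd, $2$ is invertible, so I obtain $T\equiv 0 \pmod{2nk+1}$.

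Next I would exploit condition $(\rm{b_1})$ twice. For each $a\in\{1,\ldots,nk\}$, any cell of absolute value $a$ covers the element $x=a$; as exactly one cell covers $x=a$, there is \emph{exactly one} cell of absolute value $a$. Thus distinct cells have distinct absolute values, and every mixed cell satisfies $1\le a_c\le nk$. Let $\ell$ be the number of mixed cells. If $\ell=1$, then $|T|=a_c$ with $1\le a_c\le nk$, so $0<|T|<2nk+1$ and $T\not\equiv 0$, a contradiction. If $\ell=2$, with distinct values $a_1\ne a_2$, then $T=\pm a_1\pm a_2$, so $|T|\le a_1+a_2\le nk+(nk-1)<2nk+1$; moreover $T=0$ would force $a_1=a_2$, which is impossible, so again $0<|T|<2nk+1$ and $T\not\equiv 0$, a contradiction. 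Hence $\ell\notin\{1,2\}$. Since $\ell=0$ would mean every entry carries equal row and column signs, i.e.\ $A$ is a classical Heffter array, a genuinely weak array has $\ell\ge 1$ and therefore $\ell\ge 3$.

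The only delicate point is the bookkeeping in the third step: one must be certain that the symmetric representatives of the mixed-cell absolute values lie in $\{1,\ldots,nk\}$ and are pairwise distinct, so that a signed sum of at most two of them has absolute value strictly below the modulus and can vanish only through an impossible coincidence of absolute values. Everything else follows mechanically from the cancellation between the global row sum and the global column sum.
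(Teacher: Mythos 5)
Your argument is correct and is essentially the paper's own proof: both compare the global row-sum with the global column-sum to obtain that twice a signed sum over the mixed cells vanishes in $\Z_{2nk+1}$, and then exclude one or two mixed cells. The paper phrases the exclusion via the absence of an involution and of opposite pairs in $\Z_{2nk+1}$, while you phrase it as a size bound on distinct symmetric representatives in $\{1,\dots,nk\}$; these are the same facts.
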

\begin{proof}
In this proof given an array $A$ by $a_{ij}^R$ (respectively, $a_{ij}^C$) we denote the element of $A$
in the position $(i,j)$ with its row (respectively, column) sign.
Also, by $\sum A^R$ (respectively, $\sum A^C$) we mean the sum of all the elements in $A$ with their row (respectively, column) sign.
Set $I=\{(i,j) \mid a_{ij}^R\neq a_{ij}^C\}$.
If $A$ is a $\W\H(m,n;h,k)$, then the sum of all its elements is zero in $\Z_{2nk+1}$. Hence
$$\sum A^R=\sum A^C\equiv 0 \pmod {2nk+1},$$
which implies
$$2\sum_{(i,j)\in I}a_{ij}^R \equiv 0 \pmod {2nk+1}.$$
From this, it follows that $|I|\neq 1$ since $0\not \in A$ and $\Z_{2nk+1}$ does not contain the involution.
Also $|I|\neq 2$ since $A$ does not contain opposite elements and, again, the involution does not exist.
The thesis follows.
\end{proof}

In the remain of this section we propose some non-existence results for weak Heffter arrays and we compare the existence of classical and strictly weak Heffter arrays for the same given class of parameters.

\begin{rem}
A weak relative Heffter array cannot have exactly one cell with different row and column signs
since it does not contain $0$ and the involution (if it exists). On the other hand, a weak relative Heffter array may have exactly two cells with different row and column signs,
as in the following example, that is a $\W\H_{16}(4;4)$ (rielaboration of Example 1.4 of \cite{CMPP}):
\[
\begin{array}{|r |r|r |r|}\hline
\pm1 & -7 &-16 &22 \\  \hline
\pm23& 2  &-8 & -17\\ \hline
-13 & 19 & 4 &-10 \\ \hline
-11& -14& 20 & 5\\ \hline
\end{array}
\]

\end{rem}

To present our non-existence results we have to recall that the rows and the columns of a (weak) Heffter array
give two Heffter systems. Here we present a generalized definition in which we consider a non-trivial subgroup
of a cyclic group.

\begin{defi}
A \emph{Heffter system} $D_t(v;k)$ is a set of zero-sum $k$-subsets (called blocks) of $\Z_v$ such that for every $x\in \Z_v\setminus J$,
where $J$ is the subgroup of $\Z_v$ of order $t$, exactly once of each pair $\{x,-x\}$ is contained in exactly one block.
\end{defi}
If $t=1$ we find the classical concept of a Heffter system, see \cite{MR}.

It is clear that given a subgroup $J$ of $\Z_v$, the existence of a Heffter system on $\Z_v \setminus J$ is a necessary condition for the
existence of a (weak) Heffter array on $\Z_v$ relative to $J$. This fact leads us to generalise the statement of Lemma 3.3 of \cite{CMPP}, 
and to obtain a non-existence result of weak Heffter arrays for an infinite family of parameters.

\begin{lem}\label{lem:3n}
There is no Heffter system $D_{3n}(9n;3)$, for every $n\geq 3$.
\end{lem}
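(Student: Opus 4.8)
The plan is to exploit the projection of $\Z_{9n}$ onto $\Z_3$ to force every block to be \emph{monochromatic}, and then to extract a numerical contradiction from a single sum over all blocks. Suppose, for contradiction, that a Heffter system $D_{3n}(9n;3)$ exists. Here $J$ is the unique subgroup of order $3n$, namely $J=\langle 3\rangle$, the set of multiples of $3$; hence $\Z_{9n}\setminus J$ consists of the $6n$ elements not divisible by $3$. These split into $3n$ pairs $\{x,-x\}$ — there is no involution, since the only candidate $9n/2$, when $9n$ is even, lies in $J$ — and since each block covers one representative of three distinct pairs, the system consists of exactly $n$ blocks.

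First I would reduce modulo $3$. Reading each element by its residue in $\{1,2\}$ modulo $3$, the requirement that a block $\{a,b,c\}$ be zero-sum forces $a+b+c\equiv 0\pmod 3$; the only possibilities with entries in $\{1,2\}$ are $1+1+1$ and $2+2+2$. Thus every block is \emph{monochromatic}: its three elements are either all $\equiv 1$ or all $\equiv 2\pmod 3$. Since negation interchanges the two residue classes, each pair $\{x,-x\}$ contains exactly one element of each class, so choosing one representative per pair amounts to choosing a colour for that pair.

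Next I would set up index bookkeeping. Write the residue-$1$ elements as $3i+1$ and the residue-$2$ elements as $3j+2$ with $i,j\in\{0,\dots,3n-1\}$, and index the pairs so that pair $i$ is $\{3i+1,\,-(3i+1)\}=\{3i+1,\,3(3n-i-1)+2\}$. Let $P$ (resp.\ $Q$) be the set of pairs from which the residue-$1$ (resp.\ residue-$2$) representative is chosen, and let $b_1,b_2$ be the numbers of residue-$1$ and residue-$2$ blocks, so that $|P|=3b_1$, $|Q|=3b_2$ and $b_1+b_2=n$. A residue-$1$ block $\{3i+1,3j+1,3l+1\}$ is zero-sum modulo $9n$ exactly when $i+j+l\equiv -1\pmod{3n}$, and a residue-$2$ block exactly when its indices sum to $-2\pmod{3n}$. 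Summing these congruences over all blocks of each colour, and re-expressing each residue-$2$ index via $i\mapsto 3n-i-1$ (using $|Q|=3b_2$), I would obtain
$$\sum_{i\in P} i \equiv -b_1 \pmod{3n}, \qquad \sum_{i\in Q} i \equiv -b_2 \pmod{3n}.$$

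Finally, adding the two congruences and using $P\sqcup Q=\{0,1,\dots,3n-1\}$ yields the requirement $\frac{3n(3n-1)}{2}\equiv -n\pmod{3n}$. Evaluating the left-hand side by the parity of $n$ — it is $\equiv 0$ when $n$ is odd and $\equiv \frac{3n}{2}$ when $n$ is even — I would check that the requirement fails in both cases, since it would force $3n\mid n$ (for $n$ odd) or $3n\mid\frac{n}{2}$ (for $n$ even), neither of which holds. This contradiction completes the argument, which in fact works for every $n\ge 1$. I expect the main delicate point to be the reindexing step for the residue-$2$ class, namely tracking how negation sends the index $i$ to $3n-i-1$, together with the final modular computation involving the division by $2$; the monochromatic reduction itself is immediate, but it is precisely what makes the two colour classes decouple into the clean pair of index congruences above.
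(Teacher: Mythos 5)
Your proof is correct and follows essentially the same route as the paper's: reduce modulo $3$ to force every block to be monochromatic, then derive a contradiction by summing the zero-sum conditions over all blocks. The only difference is cosmetic --- the paper first normalizes (by negating blocks) so that every block lies in the residue class $1 \pmod 3$ and then sums the elements themselves, whereas you keep both colour classes and add the two index congruences, arriving at the same global contradiction $\tfrac{3n(3n-1)}{2}\equiv -n \pmod{3n}$.
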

\begin{proof}
Assume that there exists such a Heffter system, denoted by $D$. It is easy to see that its support is
$\left\{1,2,3,\ldots, \floor{ \frac{9n}{2}}  \right\}\setminus \left\{3,6,9,\ldots,3\floor{ \frac{3n}{2}}\right\}$.

For every triple  $\{a_1,a_2,a_3\} \in D$, the sum of its elements must be zero in
$\Z_{9n}$, and since none of them can be equal to zero modulo $3$, they must belong to the same residue class modulo $3$.

Moreover, as for every element $a$ in $D$ its opposite $-a$ is not contained in $D$,
we may assume that every triple of $D$  contains elements all congruent to $1$ modulo $3$. In fact, if any given triple $\{a_1,a_2,a_3\} \in D$ is such that $a_i \equiv 2 \pmod{3}$, $i=1,2,3$, then one can construct another  Heffter system $D'$ by substituting $\{a_1,a_2,a_3\}$ with $\{-a_1,-a_2,-a_3\}$.

However, the elements in $\Z_{9n}\setminus J$ equal to $1$ modulo $3$ are:
\[
\begin{aligned}
&\left\{1,-2, 4,-5,7,-8,\ldots,\frac{9n-4}{2}, -\frac{9n-2}{2}\right\}\qquad \text{if $n$ is even,} 	\\
	&\left\{1,-2, 4,-5,7,-8,\ldots,-\frac{9n-3}{2}, \frac{9n-1}{2}\right\}\qquad \text{if $n$ is odd.}\\
\end{aligned}
\]
It is then evident that, in any case, these elements cannot sum to zero in $\Z_{9n}$, giving a contradiction.
\end{proof}

\begin{prop}\label{prop:3n}
There is no $\W\H_{3n}(n;3)$ for every $n\geq 3$.
\end{prop}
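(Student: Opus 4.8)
The plan is to reduce the nonexistence of the array to the nonexistence of the Heffter system carried by its rows, which is precisely what Lemma~\ref{lem:3n} forbids. The whole argument hinges on the observation, recorded just before the definition of a Heffter system, that the existence of a Heffter system on $\Z_v\setminus J$ is necessary for the existence of a (weak) Heffter array over $\Z_v$ relative to $J$.

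First I would pin down the parameters. A $\W\H_{3n}(n;3)$ is a square array with $h=k=3$ over $\Z_v$ where $v=2nk+t=2\cdot n\cdot 3+3n=9n$, and $J$ is the subgroup of $\Z_{9n}$ of order $t=3n$, namely $J=\langle 3\rangle=\{0,3,6,\dots,9n-3\}$. Since $|\Z_{9n}\setminus J|=6n$, its elements form $3n$ pairs $\{x,-x\}$, matching the $nk=3n$ filled cells of the array; by condition $(\mathrm{b_3})$ each such pair is represented exactly once among the cells, read as absolute values.

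Next I would pass from the array to its \emph{row system}: read each of the $n$ rows with its row signs. By the row-sum condition (the relative analogue of $(\mathrm{c_1})$) each such row is a zero-sum triple of $\Z_{9n}$, and since each row has exactly three filled cells while $(\mathrm{b_3})$ places exactly one representative of each pair $\{x,-x\}$ in exactly one cell, the $n$ rows cover $\Z_{9n}\setminus J$ with exactly one of $x,-x$ in exactly one block. Hence the row system is a Heffter system $D_{3n}(9n;3)$. But Lemma~\ref{lem:3n} states that no $D_{3n}(9n;3)$ exists for $n\geq 3$, which is the desired contradiction and proves the proposition.

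The only point demanding care, and hence the step I would write out, is checking that the row system is a genuine Heffter system and not merely a family of zero-sum triples: one must confirm that after the row signs are applied the three entries of a row remain distinct and pairwise non-opposite, so that each block is an honest $3$-subset and the covering of $\Z_{9n}\setminus J$ is exact. This is immediate from the range of the support, which by (the proof of) Lemma~\ref{lem:3n} lies in $\left\{1,\dots,\floor{9n/2}\right\}$: two distinct absolute values $a\neq b$ in this range cannot satisfy $a\equiv -b\pmod{9n}$, so their signed versions stay distinct. With this verification in place the reduction to Lemma~\ref{lem:3n} is legitimate and the conclusion follows at once.
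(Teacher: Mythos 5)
Your proposal is correct and follows exactly the paper's route: the paper's proof is the one-line observation that the result "immediately follows" from Lemma~\ref{lem:3n} via the remark that a Heffter system on $\Z_v\setminus J$ is necessary for a (weak) relative Heffter array, and you have simply written out the parameter check and the row-system reduction that the paper leaves implicit. The distinctness verification you flag is in fact already guaranteed directly by condition $(\mathrm{b_3})$, since each pair $\{x,-x\}$ occupies exactly one cell of the whole array.
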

\begin{proof}
The result immediately follows from previous lemma.
\end{proof}
We point out that there are examples where there exists  exactly  one Heffter system with some given parameters.
Hence, also in these cases, it is not possible to construct a Heffter array, neither classical nor weak.
Finally, also the existence of at least two Heffter systems is not a sufficient condition for the existence of a Heffter array, see
Proposition \ref{prop:t43}.

Now, as a consequence of Remark \ref{rem:n34}, we believe that it is interesting to study the existence problem of a $\W\H_t(n;3)$
when $n=3,4$ for every admissible $t$. Firstly, we present the following result whose proof relies on the fact that, if
we are working in $\Z$ or in $\Z_v$ with $v$ even, the array must contain an even number of odd numbers.
The proof is omitted since it is similar to that of Proposition 3.1 of \cite{CMPP}.

\begin{prop}\label{prop:necc}
Suppose that there exists an integer $\W\H_t(n;k)$, or a non-integer   $\W\H_t(n;k)$ with $t$ even.
\begin{itemize}
\item[(1)] If $t$ divides $nk$, then
$$nk\equiv 0 \pmod 4 \quad \textrm{ or } \quad nk\equiv -t \equiv \pm 1\pmod 4.$$
\item[(2)] If $t=2nk$, then $k$ must be even.
\item[(3)] If $t\neq 2nk$ does not divide $nk$, then
$$t+2nk\equiv 0 \pmod 8.$$
\end{itemize}
\end{prop}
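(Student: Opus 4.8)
The plan is to count, in two ways, the filled cells of the array whose entry has odd absolute value, and to use that this count is forced to be even.

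First I would isolate the governing parity principle. Fix a putative $\W\H_t(n;k)$ of the type in the statement, and for each filled cell record the parity of the absolute value of its entry; since passing from $x$ to $-x$ and attaching row/column signs preserves parity, this is well defined. In the integer case every row, read via its integer representatives of smallest absolute value, sums to $0$ in $\Z$, and an integer sum vanishes only if it is even, so each row carries an even number of odd entries; summing over the $n$ rows, the total number of odd entries is even. In the non-integer case with $t$ even, $v=2nk+t$ is even, so the parity map $\Z_v\to\Z_2$ is a well-defined homomorphism, each row sums to $0$ in $\Z_v$, and applying parity again gives an even number of odd entries per row. Thus in every case covered by the statement the number of filled cells carrying an odd entry is even. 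By condition $(\mathrm{b_3})$ this number equals the number of odd integers in $supp(A)$, a set of $nk$ positive integers, so the whole problem reduces to computing $\#\{s\in supp(A):s\text{ odd}\}$ and forcing it to be even.

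Next I would record how $supp(A)$ depends on $g:=v/t=2nk/t+1$ and on the parity of $v$, and note that the three alternatives are exactly the parities of $2nk/t$: in (1) $t\mid nk$, so $2nk/t$ is even and $g$ is odd; in (3) $t\mid 2nk$ but $t\nmid nk$, so $2nk/t$ is odd and $g$ is even; and (2) is $t=2nk$, $g=2$. Case (2) is immediate, since then $J$ is the subgroup of even residues, so every entry is odd and a row is a sum of $k$ odd integers, which is even only if $k$ is even. For case (3) I would first observe that $t$ must be even (if $t$ were odd, $t\mid 2nk$ would force $t\mid nk$, excluded); hence $v$ is even, $g$ is even, $J$ contains no odd residue, and the $v/2$ odd residues of $\Z_v\setminus J$ pair as $\{x,v-x\}$, giving $v/4$ odd entries (with $4\mid v$ automatic, as $t$ and $g$ are both even). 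Evenness of $v/4$ is exactly $2nk+t\equiv 0\pmod 8$.

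The remaining work, and the only genuinely computational step, is case (1). If $t$ is even then $v$ is even and $g$ is odd, so $J$ has $t/2$ odd residues, $\Z_v\setminus J$ has $(v-t)/2=nk$ of them, i.e. $nk/2$ odd entries, whose evenness is $nk\equiv 0\pmod 4$ — the only surviving alternative of (1) when $t$ is even. If $t$ is odd then $v$ is odd and $supp(A)=\{1,\dots,W\}\setminus\{g,2g,\dots,\tfrac{t-1}{2}g\}$ with $W=nk+\tfrac{t-1}{2}$; since $g$ is odd the removed multiples have the parities of $1,2,\dots$, and a short check on the parities of $P:=\tfrac{t-1}{2}$ (equivalently $t\bmod 4$) and of $nk$ shows that the number of odd survivors $\ceil{W/2}-\ceil{P/2}$ equals $\ceil{nk/2}$, $nk/2$, or $(nk-1)/2$, whose evenness is precisely $nk\equiv 0$ or $nk\equiv -t\pmod 4$. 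I expect the bookkeeping of this last subcase — tracking the two ceiling functions against the removed odd multiples of $g$ — to be the main obstacle; it is routine but delicate, and it is exactly the part mirrored by Proposition 3.1 of \cite{CMPP}, which is why the authors omit it.
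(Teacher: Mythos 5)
Your proof is correct and follows exactly the route the paper indicates: it rests on the observation that in $\Z$ or in $\Z_v$ with $v$ even each row forces an even number of odd entries, and then counts the odd elements of the support case by case, just as in Proposition 3.1 of \cite{CMPP} to which the authors defer. The case analysis and the ceiling bookkeeping in case (1) check out, so nothing is missing.
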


\begin{prop}\label{prop:t33}
There exists
\begin{itemize}
\item[(1)] an $\H_t(3;3)$ if and only if $t=1,3,6$;
\item[(2)] a strictly weak $\W\H_t(3;3)$ if and only if $t=6$.
\end{itemize}
\end{prop}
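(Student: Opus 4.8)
The plan is to treat both parts in parallel, since both hinge on the same divisibility constraint. Because $2nk=18$ and $t\mid 2nk$, the only admissible values are $t\in\{1,2,3,6,9,18\}$, and it suffices to decide each one. First I would dispose of the values admitting neither a classical nor a weak array, which simultaneously settles the ``only if'' direction of (1) and part of (2). For $t=9$ we have $v=27$ and the order-$9$ subgroup $J=\langle 3\rangle$, so a row system would be a Heffter system $D_{3n}(9n;3)$ with the lemma's $n=3$; by Lemma~\ref{lem:3n} no such $D_9(27;3)$ exists, and since the rows of any (weak) $\H_9(3;3)$ form such a system, neither type of array exists. For $t=2$ and $t=18$, both even, Proposition~\ref{prop:necc} applies to classical and weak arrays alike: $t=18=2nk$ falls under case~(2) and forces $k$ even, contradicting $k=3$, while $t=2$ neither divides $nk=9$ nor equals $2nk$, so case~(3) requires $t+2nk=20\equiv 0\pmod 8$, which fails.

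This leaves $t\in\{1,3,6\}$. For part (1), the value $t=1$ is covered by Theorem~\ref{thm:Heffter}, which yields an $\H(3;3)$; for $t=3$ and $t=6$ I would exhibit explicit relative Heffter arrays over $\Z_{21}$ (with $J=\langle 7\rangle$) and over $\Z_{24}$ (with $J=\langle 4\rangle$), whose supports are the nine pairs $\{x,-x\}$ in $\Z_v\setminus J$, checking directly that the chosen signed representatives give vanishing row and column sums. This completes (1).

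For part (2) the three surviving values must be separated according to whether a \emph{strictly} weak array exists. The value $t=1$ is exactly the case recorded in Remark~\ref{rem:n34}: a computer search shows no strictly weak $\W\H(3;3)$, so $t=1$ is excluded. For $t=6$ I would produce an explicit $\W\H_6(3;3)$ and then verify that it is strictly weak, i.e. that no assignment of a single sign to each of its nine support values (in their fixed positions) yields a classical $\H_6(3;3)$; as there are only finitely many sign patterns, this is a direct check.

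The remaining and genuinely delicate case is $t=3$, where a classical $\H_3(3;3)$ exists by part (1) but I claim no strictly weak one does. The plan here is structural: the row signs of any weak $\W\H_3(3;3)$ give a Heffter system $D_3(21;3)$ and the column signs give another, both on the support $\{1,2,3,4,5,6,8,9,10\}$. I would first enumerate all $D_3(21;3)$: a zero-sum triple over $\Z_{21}$ with support avoiding $J$ must satisfy either $a+b+c=21$ or one element equal to the sum of the other two, so only a handful of systems arise up to the symmetry of negating a block. Then, for every way two such systems can be placed as the rows and columns of one $3\times 3$ grid, I would check that a global choice of one sign per cell already makes all rows and columns classically zero-sum, so the weak array is never strictly weak. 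The main obstacle is precisely this last enumeration: controlling how two Heffter systems can be made orthogonal in a grid and ruling out a genuine twist in each resulting array. I expect this to be the heart of the argument, and, as with Remark~\ref{rem:n34}, it may be most safely completed with computer assistance.
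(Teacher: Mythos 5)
Your proposal is correct and follows essentially the same route as the paper: the same divisibility reduction, the same eliminations of $t=2,9,18$ via Proposition \ref{prop:necc} and Lemma \ref{lem:3n}/Proposition \ref{prop:3n}, explicit arrays for the surviving values in (1) and for $t=6$ in (2), and for $t=3$ an enumeration of the (four) Heffter systems $D_3(21;3)$ followed by an exhaustive, computer-assisted check that none yields a strictly weak array. The only difference is presentational: the paper simply lists the four systems and reports the computer verification, whereas you sketch how that enumeration would be organized.
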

\begin{proof}
Since $t$ has to divide $2nk=18$, it may assume only values $1,2,3,6,9,18$.
We already know that a $\W\H_t(3;3)$ does not exist for $t=2$, see Proposition \ref{prop:necc}(3),
for $t=9$, see Proposition \ref{prop:3n} and for $t=18$, see Proposition \ref{prop:necc}(2).
Hence for these values of the parameters not a classical or a strictly weak Heffter array can exist (see Remark \ref{rem:weakwrtclassical}).

(1) The existence of an $\H_t(3;3)$ is known for $t=1$, see Theorem \ref{thm:Heffter}, and $t=3$, see Theorem 1.5 of \cite{CMPP}.
The following is an $\H_6(3;3)$:
$$
\begin{array}{|c|c|c|} \hline
1 & 	2 &   	-3   	\\ \hline
5 &    9   & 10 \\ \hline
-6 	  &  -11 & -7 \\ \hline
\end{array}$$

(2) For $t=1$ a strictly $\W\H_t(3;3)$ does not exist by Remark \ref{rem:n34}.
For $t=3$ there are only the following four Heffter systems $D_3(21;3)$:
\begin{eqnarray}
\nonumber \mathcal{D}_1 &=&\left\{  \{1,2,-3 \},  \{4,8,9 \},   \{5,6,10 \}  \right\}, \cr
\nonumber \mathcal{D}_2 &=& \left\{  \{1,4,-5 \},  \{2,8,-10 \},  \{3,6,-9 \}   \right\}, \cr
\nonumber \mathcal{D}_3 &=& \left\{  \{1,8,-9 \},  \{2,3,-5 \},  \{4,6,-10 \} \right\}, \cr
\nonumber \mathcal{D}_4 &=& \left\{  \{1,9,-10 \},  \{2,4,-6\},  \{3,5,-8 \}\right\}.
\end{eqnarray}
We have checked, with the aid of a computer, that using them it is not possible to construct a strictly weak Heffter array.
Finally, the following is a strictly $\W\H_6(3;3)$:
$$
\begin{array}{|c|c|c|} \hline
1 & 	2 &   	-3   	\\ \hline
5 &   \pm 9   & \pm10 \\ \hline
-6 	  &  \mp7 & -11 \\ \hline
\end{array}$$
\end{proof}

\begin{prop}\label{prop:t43}
There exists 
\begin{itemize}
\item[(1)] an $\H_t(4;3)$ if and only if $t=1,2,3,4,6$;
\item[(2)] a strictly weak $\W\H_t(4;3)$ if and only if $t=2,4$.
\end{itemize}
\end{prop}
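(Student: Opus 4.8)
The plan is as follows. Since $t$ must divide $2nk=24$, the only admissible values are $t\in\{1,2,3,4,6,8,12,24\}$, and I would treat them one by one, keeping in mind (Remark \ref{rem:weakwrtclassical}) that excluding \emph{every} weak array at a given $t$ simultaneously excludes the classical and the strictly weak one, whereas whenever a classical $\H_t(4;3)$ exists the strictly weak question has to be settled separately. First I would dispose of the values admitting no weak array at all. For $t=24=2nk$ the number $k=3$ is odd, so Proposition \ref{prop:necc}(2) already forbids a $\W\H_{24}(4;3)$. For $t=12$ I would invoke Proposition \ref{prop:3n} with its parameter equal to $4$ (so that $3n=12$ and $9n=36$): there is no Heffter system $D_{12}(36;3)$, hence no $\W\H_{12}(4;3)$, and therefore no classical or strictly weak array either.

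The case $t=8$ is the main obstacle. Here the necessary conditions are silent: $8$ neither divides $nk=12$ nor equals $2nk$, and $t+2nk=32\equiv 0\pmod 8$, so Proposition \ref{prop:necc}(3) is satisfied. Moreover, unlike the two previous values, Heffter systems do exist here — for instance the four triples $\{1,17,14\},\ \{5,21,6\},\ \{9,25,30\},\ \{13,29,22\}$ form a $D_8(32;3)$ in $\Z_{32}$ (each sums to $0$, and the twelve listed elements represent the twelve classes $\{x,-x\}$ of $\Z_{32}\setminus J$ with $J=\langle 4\rangle$). Consequently the non-existence cannot be read off from Heffter systems, and I would instead enumerate all $D_8(32;3)$ and verify, with the aid of a computer as in Proposition \ref{prop:t33}(2), that no two of them (with possibly different row and column signs) can be superimposed orthogonally on a common $4\times 4$ layout of absolute values. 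This rules out every $\W\H_8(4;3)$, the weak ones included, and I expect this exhaustive check to be the hard part, precisely because the extra sign freedom of the weak setting must be taken into account.

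Next I would establish the positive results. For $t=1$ a classical $\H(4;3)$ exists by Theorem \ref{thm:Heffter}, and for $t=2,3,4,6$ I would exhibit explicit $\H_t(4;3)$ over $\Z_{26},\Z_{27},\Z_{28},\Z_{30}$ respectively, for which a direct check of the row and column sums suffices. For the strictly weak statement I would produce explicit $\W\H_2(4;3)$ and $\W\H_4(4;3)$ and confirm that they are strictly weak by checking that their underlying layout of absolute values admits no single per-cell sign making all rows and columns vanish.

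Finally, for the remaining values at which a classical array exists I must show that no strictly weak one does. For $t=1$ this is exactly Remark \ref{rem:n34}. For $t=3$ and $t=6$ I would list all Heffter systems $D_3(27;3)$ and $D_6(30;3)$ — a finite set, as in the $t=3$ analysis of Proposition \ref{prop:t33}(2) — and check by computer that none of the resulting sign-assignments yields a strictly weak $\W\H_t(4;3)$. Collecting the four steps gives both equivalences. The principal difficulty throughout is the $t=8$ case, where Heffter systems are plentiful yet no array exists, so the obstruction is genuinely arithmetic-combinatorial rather than a clean parity argument; the finite verifications needed for the strictly weak non-existence at $t=3,6$ are the secondary sticking points.
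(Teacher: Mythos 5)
Your overall skeleton coincides with the paper's: the admissible $t$ are the divisors of $24$; $t=24$ is killed by Proposition \ref{prop:necc}(2) and $t=12$ by Proposition \ref{prop:3n} with $n=4$; existence in part (1) is by Theorem \ref{thm:Heffter} for $t=1$ and by explicit arrays otherwise (the paper cites Theorem 1.5 of \cite{CMPP} for $t=3$ rather than exhibiting one, a cosmetic difference); part (2) is settled by explicit strictly weak examples for $t=2,4$, by Remark \ref{rem:n34} for $t=1$, and by a computer check over the finitely many Heffter systems $D_3(27;3)$ and $D_6(30;3)$ for $t=3,6$. All of that matches.

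The one genuine divergence is $t=8$, and there your framing is wrong even though your method would ultimately succeed. The paper's argument is a one-liner: there is \emph{exactly one} Heffter system $D_8(32;3)$ (up to replacing blocks by their negatives), namely $\{\{3,10,-13\},\{6,5,-11\},\{2,7,-9\},\{1,14,-15\}\}$, and since the rows (with row signs) and the columns (with column signs) of any weak or classical $\H_8(4;3)$ would each have to form such a system, uniqueness forces the row-blocks and column-blocks to coincide, which is impossible for a $4\times 4$ layout with three entries per line. Your assertion that ``Heffter systems do exist here'' and are plentiful, so that ``the non-existence cannot be read off from Heffter systems,'' is mistaken: the example you exhibit, $\{1,17,14\}=\{1,-15,14\}$, $\{5,21,6\}=\{6,5,-11\}$, $\{9,25,30\}=-\{2,7,-9\}$, $\{13,29,22\}=-\{3,10,-13\}$, is exactly the paper's unique system with two blocks negated. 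Your proposed exhaustive search over pairs of superimposed systems would of course terminate with the empty answer (the enumeration step would itself reveal the uniqueness), so this is not a fatal gap, but you have identified the wrong step as ``the hard part'': $t=8$ is the cheapest case once the Heffter-system count is known, and the only computations that genuinely require effort are the strictly-weak exclusions at $t=3$ and $t=6$.
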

\begin{proof}
Since $t$ has to divide $2nk=24$, it may assume only values $1,2,3,4,6,8,12,24$.
We already know that a $\W\H_t(4;3)$ does not exist
for $t=12$, see Proposition \ref{prop:3n} and for $t=24$, see Proposition \ref{prop:necc}(2).
Furthermore for $t=8$ there exists exactly one Heffter system $D_8(32;3)$, that is
\[
\mathcal{D} = \left\{ \{3, 10, -13 \}, \{6,5,-11 \}, \{2,7,-9 \}, \{1,14,-15 \}  \right\}.
\]
Hence for all these values of the parameters neither a classical nor a strictly weak Heffter array can exist.

(1) The existence of an $\H_t(4;3)$ is known for $t=1$, see Theorem \ref{thm:Heffter}, and $t=3$, see Theorem 1.5 of \cite{CMPP}.
The following are an $\H_{2}(4;3)$, an $\H_{4}(4;3)$ and an $\H_6(4;3)$:
$$
\begin{array}{|c|c|c|c|} \hline
-1 & 9 &  -8 &  	\\ \hline
12 &  -2	& 	   & 	-10 \\ \hline
  & 	-7  &   3 & 4 \\ \hline
		-11 &  & 5   & 6 \\ \hline
\end{array}
\qquad
\begin{array}{|c|c|c|c|} \hline
-1 & 	-5 &   	6 &  	\\ \hline
-10 &  2	& 	   & 	8 \\ \hline
 11  & 	  &    13 & 4 \\ \hline
     &     3&    9     &-12 \\ \hline
\end{array}
\qquad
\begin{array}{|c|c|c|c|} \hline
1 & 	-7 & 		6 &  	\\ \hline
-9 &  		& 	  11 & 	-2 \\ \hline
   & 	 3 &    13 & 14 \\ \hline
8 &     4&         &-12 \\ \hline
\end{array}
$$

(2) For $t=1$ a strictly $\W\H_t(4;3)$ does not exist by Remark \ref{rem:n34}.
The following are a strictly $\W\H_2(4;3)$ and a strictly $\W\H_4(4;3)$ :
$$
\begin{array}{|c|c|c|c|} \hline
1 & 	2 &  -3 &  	\\ \hline
4 &  	& 	\mp10   & 6 \\ \hline
-5  & 	\mp9  &  & \mp12 \\ \hline
 & -11 & -7  & \mp8 \\ \hline
\end{array}
\quad\quad
\begin{array}{|c|c|c|c|} \hline
1 & 	-4 &   	3 &  	\\ \hline
 &  10	& 	\mp2   & 	-8 \\ \hline
 \mp12  & 	  &  -5 & -11 \\ \hline
 -13 & -6 &     &-9 \\ \hline
\end{array}$$
For $t=3,6$ there are respectively $9$ and $10$ Heffter systems $D_t(24+t;3)$, and we have checked using a computer
that it is not possible to construct a strictly  $\W\H_t(4;3)$ starting from them.
\end{proof}

\section{An infinite class of strictly weak Heffter arrays}\label{sec:4}
In this section we construct an infinite family of strictly weak Heffter arrays for a class of parameters for which,
at the moment, the existence of a Heffter array is unknown.

In \cite{CMPP} the authors, after having introduced the concept of \emph{relative} Heffter arrays,
have obtained the following almost complete result for the integer square case with $t=k$.

\begin{thm}\label{thm:esistenza}
Let $3\leq k\leq n$ with $k\neq 5$.
  There exists an integer $\H_k(n;k)$ if and only if one of the following holds:
  \begin{itemize}
    \item[(1)] $k$ is odd and $n\equiv 0,3\pmod 4$;
    \item[(2)] $k\equiv 2\pmod 4$ and $n$ is even;
    \item[(3)] $k\equiv 0\pmod 4$.
  \end{itemize}
Furthermore, there exists an integer $\H_5(n;5)$ if
$n\equiv 3\pmod 4$ and it does not exist if $n\equiv 1,2\pmod 4$.
\end{thm}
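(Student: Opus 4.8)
The plan is to prove both directions by isolating an arithmetic (parity) obstruction from a family of explicit constructions. Throughout write $v=k(2n+1)$, so that the subgroup $J\leq\Z_v$ of order $t=k$ is exactly the set of multiples of $2n+1$, and the integer condition forces the multiset of absolute values of the entries to be $\{1,2,\ldots,\floor{v/2}\}\setminus J$, a set of size $(v-k)/2=nk$ (note that for $k$ even $\floor{v/2}=v/2$ itself lies in $J$ and is deleted). Recording this canonical support is the starting point for both directions.

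For the necessity (the ``only if'' part, together with the non-existence statements for $k=5$) I would argue by parity, exactly as in the classical integer case, where it yields $nk\equiv 0,3\pmod 4$. Since every row and every column must sum to $0$ in $\Z$, and an integer sum is even iff it has an even number of odd summands, each line contains an even number of odd entries; summing over the rows shows that the total number $O$ of odd elements of the support must be even. The whole of necessity then reduces to computing the parity of $O$ as a function of $n,k\pmod 4$. Counting the odd integers in $\{1,\ldots,\floor{v/2}\}$ and subtracting the odd deleted multiples (a multiple $j(2n+1)$ being odd iff $j$ is odd, since $2n+1$ is odd) shows that $O$ is even precisely in the three listed cases; in particular for $k=5$ it is odd when $n\equiv 1,2\pmod 4$, ruling those out. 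I expect this to be routine but to demand a careful split into $k$ odd, $k\equiv 2$, and $k\equiv 0\pmod 4$, with the extra bookkeeping that for $k$ even the top value $v/2$ must be excluded from the count.

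For the sufficiency I would give explicit constructions organised by the congruence classes of the three cases, rather than a recursion in $n$ (which is awkward here because both the ambient group $\Z_{k(2n+1)}$ and the support change with $n$). The natural model is to place the filled cells on the $k$ consecutive broken diagonals $\{(i,i+s\bmod n): 0\leq s\leq k-1\}$, so that row $i$ occupies columns $i,\ldots,i+k-1$ and column $j$ occupies rows $j-k+1,\ldots,j$, and then to define the signed entry on each diagonal by an explicit, essentially shift-invariant formula. With such a cyclic pattern the row-sum and column-sum conditions collapse to a single pair of identities coming from the first row and first column, while the requirement that each pair $\{x,-x\}$ of $\Z_v\setminus J$ be met exactly once becomes the statement that the chosen absolute values form a transversal of $\{1,\ldots,\floor{v/2}\}\setminus J$. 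One then tunes the values and the placement of the signs separately for $k$ odd with $n\equiv 0,3$, for $k\equiv 2$ with $n$ even, and for $k\equiv 0$ (all $n$), treating $k=5$, $n\equiv 3$ by a dedicated variant.

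The hard part will be these explicit constructions and their verification, and above all the anomalous behaviour at $k=5$. The parity count leaves both $k=5$, $n\equiv 0$ and $k=5$, $n\equiv 3\pmod 4$ admissible, yet the generic odd-$k$ pattern degenerates at $k=5$, so one must supply an ad hoc array for $k=5$, $n\equiv 3$ and accept that $k=5$, $n\equiv 0$ remains open. Concretely, the obstacle is to guarantee simultaneously the transversal condition, the two sum identities over $\Z$, and the even-number-of-odds distribution in every single line, across all residue classes and without collisions with the deleted subgroup $J$; small values of $k$, and $k=5$ most of all, are where these constraints are tightest and where the uniform formulas must be replaced by special constructions.
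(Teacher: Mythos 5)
First, a point of order: this theorem is not proved in the present paper at all --- it is quoted from \cite{CMPP} as background (the text explicitly attributes it there), so there is no in-paper proof to compare yours against. The closest in-paper material is Proposition \ref{prop:necc}, whose proof is itself omitted as ``similar to that of Proposition 3.1 of \cite{CMPP}'', and the $diag$-based construction of Theorem \ref{prop:existence}, which works in the same diagonal framework you sketch. Judged on its own terms, your necessity half is sound: with $v=k(2n+1)$ and $J$ the set of multiples of $2n+1$, the support of an integer $\H_k(n;k)$ is forced to be $\{1,\dots,\lfloor v/2\rfloor\}\setminus J$, each zero integer row or column sum contains an even number of odd summands, and counting the odd elements of the support (subtracting the multiples $j(2n+1)$ with $j$ odd) does reproduce exactly conditions (1)--(3) and rules out $k=5$ with $n\equiv 1,2\pmod 4$; this is the standard parity argument and it checks out in the residue classes I verified.

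The genuine gap is the sufficiency half, which is the bulk of the theorem and of which you supply only a plan. Two concrete problems. First, your claim that on a cyclically $k$-diagonal skeleton with an ``essentially shift-invariant'' entry rule the row-sum and column-sum conditions ``collapse to a single pair of identities'' is not tenable: the entries must be pairwise distinct in absolute value, so they cannot be constant along diagonals, and in the constructions actually used in this area (the procedure $diag(r,c,s,\Delta_1,\Delta_2,\ell)$ of Section \ref{sec:4} and of \cite{CMPP,DW}) the entry changes by $\Delta_2$ at each step, so the sums vary with the row and column index and must be balanced across several diagonals carrying different increments, usually with additional ad hoc cells near the seams; each congruence class of indices then requires its own verification. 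Second, no explicit arrays are exhibited for any of the infinitely many cases (1)--(3), nor for $k=5$ with $n\equiv 3\pmod 4$; since existence cannot follow from the parity obstruction, the proof is incomplete without them. As written, the proposal establishes only the ``only if'' direction together with the two non-existence claims for $k=5$.
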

Note that for $k=5$  the existence problem of integer relative Heffter arrays $\H_5(n;5)$ has been proved only for
$n\equiv 3\pmod 4$, leaving the case $n\equiv 0 \pmod 4$ open. For this class in \cite{CMPP} there are only two examples for $n=8$
and $n=16$.
Here we focus on this open case and we prove that there exists a strictly weak integer $\W\H_5(n;5)$ for every $n\equiv 0\pmod 4$, with $n\geq 12$.

The construction we are going to present is based on filling in the cells of a set of diagonals.
We recall that, given a square array of order $n$, for $i=1,\ldots,n$, the $i$-th diagonal is so defined
$D_i=\{(i,1),(i+1,2),\ldots, (i-1,n)\}$. All the arithmetic on row and column indices is performed modulo $n$, where the set of reduced
residues is $\{1,2,\ldots,n\}$. The diagonals $D_i,D_{i+1},\ldots, D_{i+k-1}$ are $k$ consecutive diagonals.
Given $n\geq k\geq 1$, a partially filled array $A$ of order $n$ is $k$-\emph{diagonal} if its non empty cells are exactly those of $k$ diagonals.
Furthermore, if these diagonals are consecutive $A$ is said to be \emph{cyclically} $k$-\emph{diagonal}.

Many of the known square Heffter arrays have a diagonal structure which has been shown to be useful for recursive constructions
and for the applications to biembeddings. In addition,  this diagonal structure has been extremely useful in computer searches for Heffter arrays.
In \cite{DW} the authors introduced the following smart notation to describe a diagonal array; we point out that this notation has been used in several subsequent papers including  \cite{CMPP,CPPBiembeddings}.
This procedure for filling a sequence of cells on a diagonal is termed {\em diag} and it has six parameters, as follows.

Let $A$ be an $n \times n$ p.f. array; then, the procedure $diag(r,c,s,\Delta_1,\Delta_2,\ell)$ fills the entries
\[
A[r+i\Delta_1,c+i\Delta_1]=s+i\Delta_2\qquad \textrm{for}\ i\in[0,\ell-1].
\]
The parameters used in the $diag$ procedure have the following meaning:
\begin{itemize}
  \item $r$ denotes the starting row,
  \item $c$ denotes the starting column,
  \item $s$ denotes the entry $A[r,c]$,
  \item $\Delta_1$ denotes the increasing value of the row and column at each step,
  \item $\Delta_2$ denotes how much the entry is changed at each step,
  \item $\ell$ is the length of the chain.
\end{itemize}
In the following given two positive integers $a,b$ with $a \leq b$, by $[a,b]$
we denote the set $\{a,a+1,\ldots,b\}$.
We are now ready to present the main existence result of this section.

We remark that in Example \ref{ex:W5} we follow step by step the construction illustrated in the proof
of the following theorem. Hopefully it can help the reader to understand the idea developed in the proof.

\begin{thm}\label{prop:existence}
There exists a strictly weak integer $\W\H_5(n;5)$ for every $n \equiv 0 \pmod{4}$ with $n\geq 12$.
\end{thm}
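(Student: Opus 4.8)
The plan is to give an explicit, cyclically $5$-diagonal construction over $\Z_v$ with $v=2nk+t=10n+5$, relative to the subgroup $J=\{0,\pm(2n+1),\pm(4n+2)\}$ of order $5$, and then to argue separately that it does not descend from any classical array. Taking representatives in $\{-(5n+2),\dots,5n+2\}$, the forbidden set $J\setminus\{0\}$ is $\{\pm(2n+1),\pm(4n+2)\}$, so the support of any $\W\H_5(n;5)$ must be exactly $S=\{1,2,\dots,5n+2\}\setminus\{2n+1,4n+2\}$, a set of $5n$ values to be placed in the $5n$ cells of five consecutive diagonals $D_1,\dots,D_5$. First I would fix the skeleton: fill each $D_i$ by one or more $diag(r,c,s,1,\delta_i,\cdot)$ procedures, so that every row and every column meets each diagonal exactly once and therefore contains exactly $k=5$ entries.

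The engine of the verification is the \emph{constant-sum} behaviour of diagonal fillings. If $D_i$ carries the signed arithmetic progression with common difference $\delta_i$, then the entry of $D_i$ lying in row $j$ equals $s_i+(j-i)\delta_i$ and the entry lying in column $c$ equals $s_i+(c-1)\delta_i$; consequently, as soon as $\sum_{i=1}^{5}\delta_i=0$, \emph{all} row sums collapse to the single value $\sum_i s_i-\sum_i i\,\delta_i$ and \emph{all} column sums collapse to $\sum_i s_i$, reducing the whole problem to two scalar equations. I would choose the five progressions so that in absolute value they partition $S$. The only non-routine point is that $S$ omits $2n+1$ and $4n+2$ yet reaches up to $5n+2$, so the naive progressions (which would sweep $\{1,\dots,5n\}$ contiguously) must be \emph{rerouted}: the two forbidden values are dropped and the two overshoot values $5n+1,5n+2$ are inserted in their place. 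This local reroute is precisely what breaks the clean classical balance and is where the weak freedom enters.

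The reroute perturbs only the rows and columns passing through the two swapped cells (plus possibly a bounded number of auxiliary cells needed to keep every entry in $\Z$ and in $S$), each by a controlled amount. To annihilate these bounded perturbations I would install \emph{weak cells} exactly there: flipping the column sign of a cell carrying $x$ keeps its row contribution $+x$ while changing its column contribution to $-x$, shifting only that column's sum by $-2x$ and leaving all row sums untouched, and symmetrically for a row-sign flip. Pinning the progression signs so that every row already sums to $0$ in $\Z$, I would then select the signs at the swapped cells so that the few remaining column corrections vanish. Verifying the resulting finite list of scalar conditions, that no value of $S$ is repeated, and that the array is genuinely integer is routine bookkeeping for the single residue family $n\equiv 0\pmod 4$, anchored by the base case $n=12$ that is worked out step by step in Example~\ref{ex:W5}.

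The main obstacle, deserving a genuine argument rather than bookkeeping, is \emph{strict} weakness: one must show the constructed $A$ cannot be obtained by re-signing a classical $\H_5(n;5)$, i.e.\ that no single choice of one sign per cell makes \emph{both} all rows and all columns vanish. Since classical existence for $n\equiv 0\pmod 4$ is itself open (Theorem~\ref{thm:esistenza}), I cannot invoke non-existence of a classical array; instead I would show that for this particular placement of the $|a_{ij}|$ the row equations force the signs, up to the global flip, into a configuration incompatible with the column equations. Concretely I would isolate a row and a column meeting in one of the weak cells, prove that the five absolute values in each admit an essentially unique zero-sum signing, and verify that the two forced signings disagree at the shared cell, so a mixed sign there is unavoidable. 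A clean way to make this uniform in $n$ is a congruence obstruction modulo a small divisor, exploiting that a swap shifts an affected line by $2(5n+1)-2(2n+1)=6n$ or $2(5n+2)-2(4n+2)=2n$, quantities with controlled residues; one then checks that these residues can be absorbed weakly but not classically. Establishing this invariant uniformly is the crux; the remainder is the explicit $diag$ list together with the constant-sum identities above.
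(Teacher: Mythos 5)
Your proposal takes a genuinely different route from the paper, but it is a strategy outline rather than a proof, and the essential combinatorial content is missing. The paper does not build a cyclically $5$-diagonal array at all: it starts from the known integer cyclically $3$-diagonal $\H_3(n;3)$ of \cite{CMPP} (diagonals $D_n,D_1,D_2$), shifts the support of the main diagonal by $4n+2$ to reach $[4n+3,5n+2]$, and then inserts $2\times 2$ blocks $M_i$ (some carrying mixed signs $\pm,\mp$) in cells far from those diagonals, near column $\frac{n}{2}+i$ in rows $i,i+1$; a final swap of a few main-diagonal entries kills the residual row/column sums $\pm1$. All the weak cells live in the inserted blocks, and the support identity $[1,5n+2]\setminus\{2n+1,4n+2\}$ falls out of the two pieces $[n+1,3n+1]\setminus\{2n+1\}$, $[4n+3,5n+2]$ and $[1,n]\cup[3n+2,4n+1]$. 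Your plan instead posits five full-length arithmetic progressions on five consecutive diagonals whose absolute values partition this same set. That decomposition is the entire theorem and you never exhibit it: the target set has holes at $2n+1$ and $4n+2$ and overshoots to $5n+2$, so it is not a union of five length-$n$ progressions in any evident way (neither five intervals nor five residue classes work), and your ``reroute'' of two values, together with the requirement $\sum_i\delta_i=0$ and the two scalar sum conditions, is a heavily constrained system whose solvability you assert but do not check. Likewise, repairing a perturbed column by flipping the sign of a single cell requires that column to contain an entry equal to exactly half the perturbation, and the global constraint that twice the sum of all mixed-sign entries vanishes (cf.\ Proposition \ref{prop:weak_from_class} and the argument preceding it in Section \ref{sec:3}) is never verified.

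On strict weakness you correctly identify the crux, and in fact you are more explicit about it than the paper, whose proof of Theorem \ref{prop:existence} verifies support and zero sums but leaves strictness to inspection (``it can now be seen'' in Example \ref{ex:W5}). However, your proposed argument is not carried out either: a five-element row generally admits more than one zero-sum signing up to global negation (e.g.\ the first row of the $n=12$ example, with absolute values $55,18,1,49,13$, has the two signings determined by the subsets $\{55,13\}$ and $\{49,18,1\}$), so the ``essentially unique signing per line'' step fails as stated, and the congruence invariant you gesture at is not formulated precisely enough to evaluate. As it stands, neither the construction nor the strictness claim is established by your argument.
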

\begin{proof}
We begin by considering the integer cyclically $3$-diagonal $\H_3(n;3)$ with $n\equiv 0\pmod 4$ constructed in Proposition 5.3 of \cite{CMPP},
so let $A$ be the $n \times n$ array built using the following procedures labeled $\texttt{A}$ to $\texttt{J}$:
$$\begin{array}{lcl}
\texttt{A}:\; diag\left(2,2,1,1,1,\frac{n-4}{2}\right); & \hfill &
\texttt{B}:\; diag\left(\frac{n+6}{2},\frac{n+6}{2},-\frac{n+4}{2},1, -1,\frac{n-4}{2}\right);\\[3pt]
\texttt{C}:\; diag\left(2,1,-\frac{5n+4}{2},2,-1,\frac{n}{4}\right); & &
\texttt{D}:\; diag\left(3,2,-\frac{3n+2}{2},2,-1,\frac{n-4}{4}\right);\\[3pt]
\texttt{E}:\; diag\left(1,2,\frac{3n}{2},2,-1,\frac{n}{4}\right); &&
\texttt{F}:\; diag\left(2,3,\frac{5n+2}{2},2,-1,\frac{n-4}{4}\right);\\[3pt]
\texttt{G}:\; diag\left(\frac{n+6}{2},\frac{n+4}{2},-\frac{5n}{4},2,1,\frac{n}{4}\right); &&
\texttt{H}:\; diag\left(\frac{n+8}{2},\frac{n+6}{2},-\frac{9n}{4},2,1,\frac{n-4}{4}\right);\\[3pt]
\texttt{I}:\; diag\left(\frac{n+4}{2},\frac{n+6}{2},\frac{11n+8}{4},2,1,\frac{n}{4}\right); &&
\texttt{J}:\; diag\left(\frac{n+6}{2},\frac{n+8}{2},\frac{7n+8}{4},2,1,\frac{n-4}{4}\right).
\end{array}$$
We also fill the following cells of $A$ in an \textit{ad hoc} manner:
$$\begin{array}{lclcl}
A\left[1,1\right]=-\frac{n-2}{2}, & \quad  &
A\left[\frac{n}{2},\frac{n}{2}\right]=n, & \quad &
A\left[\frac{n}{2},\frac{n+2}{2}\right]=\frac{7n+4}{4}, \\[3pt]
A\left[\frac{n+2}{2},\frac{n}{2}\right]=-\frac{9n+4}{4},& &
A\left[\frac{n+2}{2},\frac{n+2}{2}\right]=\frac{n+2}{2},& &
A\left[\frac{n+2}{2},\frac{n+4}{2}\right]=\frac{7n}{4}, \\[3pt]
A\left[\frac{n+4}{2},\frac{n+2}{2}\right]=-\frac{9n+8}{4}, &&
A\left[\frac{n+4}{2},\frac{n+4}{2}\right]=-\frac{n}{2}.
  \end{array}$$	

Note that the filled cells of $A$ are exactly those of the diagonals $D_1$, $D_2$ and $D_n$. Also, it is easy to see that:
\[
\begin{aligned}
	&\text{\emph{supp}}(D_1) = [1,n] \\
	&\text{\emph{supp}}(D_2 \cup D_n) = [n+1,3n+1] \setminus\{2n+1\}.
	\end{aligned}
\]
Consider now the matrix $B$ obtained from $A$ by adding $4n+2$ to the positive elements of $D_1$
and $-(4n+2)$ to the negative elements of $D_1$. Since we have only changed the elements in the main diagonal of $A$,
it follows that the total sum of  the $i$-th row of $B$ is equal to the total sum of the $i$-th column of $B$, for every $i \in [1,n]$.
In particular, their sum is $4n+2$ if $i \in \left[2, \frac{n}{2}+1\right]$, and $-(4n+2)$ if $i \in \{1\} \cup \left[\frac{n}{2}+2,n\right]$.
Clearly now ${\emph{supp}}(D_1) = [4n+3,5n+2]$, while the support of $D_2$ and $D_n$ is unchanged.
Hence
$$supp(B)=[n+1,2n]\cup[2n+2,3n+1]\cup[4n+3,5n+2].$$

Consider the following arrays:
\[
M_1 =  \begin{bmatrix}
 \pm 1 & 4n+1 \\
 -4n 	 & \mp 2 \\
\end{bmatrix},
\qquad M_2 = \begin{bmatrix}
 \mp \left(\frac{n}{2}+1\right) & -\frac{7n}{2} \\
 \frac{7n}{2}+1 	 & \pm \left(\frac{n}{2}+2\right) \\
\end{bmatrix}
\]
and for any $ i \in \left\{3,5,7,\dotsc, \frac{n}{2}-1\right\}\cup \left\{\frac{n}{2}+3, \frac{n}{2}+5,\dotsc, n-1\right\}$, define $M_i$ to be:
\[
M_i = \begin{bmatrix}
i+1 & 4n+1-i 	\\
4n+2-i & i 	  	\\
\end{bmatrix}.
\]
Set $I=\{1,2\}\cup \left\{3,5,7,\dotsc, \frac{n}{2}-1\right\}\cup \left\{\frac{n}{2}+3, \frac{n}{2}+5,\dotsc, n-1\right\}$.
It can then be seen that $$supp\left( \bigcup_{i \in I} M_i \right) = [1,n] \cup [3n+2,4n+1].$$

Now starting from $B$ we construct a new array $C$ by adding to $B$ the arrays  $M_i$ in the following positions.
The elements of the array $M_1$ are inserted in the positions $(1,\frac{n}{2}+1)$, $(1,\frac{n}{2}+2)$, $(2,\frac{n}{2}+1)$,  and
$(2,\frac{n}{2}+2)$. Those of the array $M_2$ fill the positions $(\frac{n}{2}+1,1)$, $(\frac{n}{2}+1,2)$, $(\frac{n}{2}+2,1)$, and
$(\frac{n}{2}+2,2)$.
Then, for all $i \in \{3,5,7,\dotsc, \frac{n}{2}-1\}$ we insert the elements of $\mp M_i$ in the positions
$(i,\frac{n}{2}+i)$, $(i,\frac{n}{2}+i+1)$, $(i+1,\frac{n}{2}+i)$, $(i+1,\frac{n}{2}+i+1)$.
Finally, for all $i \in \{\frac{n}{2}+3, \dotsc, n-1\}$ the elements of the array $\pm M_i^T$,
where by $M_i^T$ we mean the transpose of $M_i$, are inserted in the positions
$(i,i-\frac{n}{2})$, $(i,i-\frac{n}{2}+1)$, $(i+1,i-\frac{n}{2})$, $(i+1,i-\frac{n}{2}+1)$.

Note that, in such a way, we added two filled cells in each row and each column. Hence
every row and every column of $C$ has exactly $5$ filled cells.
Also note that
$$supp(C)=supp(B)\cup supp \left(\cup_{i\in I}M_i\right)=[1,5n+2]\setminus \{2n+1,4n+2\}.$$

Now, notice that the ordered list of the sums of the rows of this new array $C$, that is the same of the sum of its columns, is:
\begin{equation} \label{eq:somme}
(\underbrace{0,0,\dotsc,0}_{\text{$n/2$ elements}}, +1,+1, \underbrace{+1,-1,+1,-1,\dotsc, +1,-1}_{\text{$n/2-2$ elements}}).
\end{equation}

To conclude the construction, we need to exchange some elements belonging to the main diagonal of $C$.
In particular, for every $i \in \{\frac{n}{2}+3, \frac{n}{2}+5,\dotsc, n-1\}$ we exchange the element $C[i,i]$
with $C[i+1,i+1]$. Finally, in the position $(\frac{n}{2}+1,\frac{n}{2}+1)$ we put $-C[\frac{n}{2}+2,\frac{n}{2}+2]$
and in the position $(\frac{n}{2}+2,\frac{n}{2}+2)$ we put $-C[\frac{n}{2}+1,\frac{n}{2}+1]$.

Clearly, the support of this new array is nothing but the support of $C$.
The property that now the total sum  of the rows and the columns is zero follows by combining these last assignments with Equation (\ref{eq:somme}).
\end{proof}

\begin{ex}\label{ex:W5}
In this example we follow step by step  the proof of Theorem \ref{prop:existence} to construct a strictly $\W\H_5(12;5)$.

Firstly, let $A$ be the $\H_3(12;3)$ obtained by applying the construction of Proposition 5.3 of \cite{CMPP} (for convenience we have highlighted its subdivision into $2 \times 2$ blocks):
$$\begin{array}{ |r |r| |r |r| |r |r| |r |r| |r |r| |r |r|}\hline
-5 & 18 &  &  &  &  &  &  &  &  &  & -13\\ \hline
-32 & 1 & 31 &  &  &  &  &  &  &  &  & \\ \hline\hline
 & -19 & 2 & 17 &  &  &  &  &  &  &  & \\ \hline
 &  & -33 & 3 & 30 &  &  &  &  &  &  & \\ \hline\hline
 &  &  & -20 & 4 & 16 &  &  &  &  &  & \\ \hline
 &  &  &  & -34 & 12 & 22 &  &  &  &  & \\ \hline\hline
 &  &  &  &  & -28 & 7 & 21 &  &  &  & \\ \hline
 &  &  &  &  &  & -29 & -6 & 35 &  &  & \\ \hline\hline
 &  &  &  &  &  &  & -15 & -8 & 23 &  & \\ \hline
 &  &  &  &  &  &  &  & -27 & -9 & 36 & \\ \hline\hline
 &  &  &  &  &  &  &  &  & -14 & -10 & 24\\ \hline
37 &  &  &  &  &  &  &  &  &  & -26 & -11\\ \hline
\end{array}$$
We now construct the array $B$ obtained from $A$ by adding  $4n+2 = 50$
to the positive elements of $D_1$ and $-50$ to the negative ones:
$$\begin{array}{ |r |r| |r |r| |r |r| |r |r| |r |r| |r |r|}\hline
\mathbf{-55} & 18 &  &  &  &  &  &  &  &  &  & -13\\ \hline
-32 & \mathbf{51} & 31 &  &  &  &  &  &  &  &  & \\ \hline\hline
 & -19 & \mathbf{52} & 17 &  &  &  &  &  &  &  & \\ \hline
 &  & -33 & \mathbf{53} & 30 &  &  &  &  &  &  & \\ \hline\hline
 &  &  & -20 & \mathbf{54} & 16 &  &  &  &  &  & \\ \hline
 &  &  &  & -34 & \mathbf{62} & 22 &  &  &  &  & \\ \hline\hline
 &  &  &  &  & -28 & \mathbf{57} & 21 &  &  &  & \\ \hline
 &  &  &  &  &  & -29 & \mathbf{-56} & 35 &  &  & \\ \hline\hline
 &  &  &  &  &  &  & -15 & \mathbf{-58} & 23 &  & \\ \hline
 &  &  &  &  &  &  &  & -27 & \mathbf{-59} & 36 & \\ \hline\hline
 &  &  &  &  &  &  &  &  & -14 & \mathbf{-60} & 24\\ \hline
37 &  &  &  &  &  &  &  &  &  & -26 & \mathbf{-61}\\ \hline
\end{array}$$
Consider the arrays $M_i$ for $i=\{1,2\}\cup \{3,5\}\cup \{9,11\}$:

\[
\begin{aligned}
&M_1 =  \begin{bmatrix}
 \pm 1 & 49 \\
 -48 	 & \mp 2 \\
\end{bmatrix},
\qquad &M_2 = \begin{bmatrix}
 \mp 7 & -42\\
 43 	 & \pm 8 \\
\end{bmatrix},
\qquad &M_3 = \begin{bmatrix}
4& 46 	\\
47 & 3	  	\\
\end{bmatrix}, \\
 &M_5 = \begin{bmatrix}
6& 44 	\\
45 & 5	  	\\
\end{bmatrix}, 
 \qquad &M_9 = \begin{bmatrix}
10& 40 	\\
41 & 9	  	\\
\end{bmatrix},
\qquad &M_{11} = \begin{bmatrix}
12& 38 	\\
39 & 11  	\\
\end{bmatrix}. \\
\end{aligned}
\]
We then insert the elements of these arrays in $B$ obtaining the new array $C$:
$$\begin{array}{|r |r| |r |r| |r |r| |r |r| |r |r| |r |r|}\hline
-55 & 18 &  &  &  &  & 		\bf{\pm1} & \bf{49} &  &  &  & -13\\ \hline
-32 & 51 & 31 &  &  &  & \bf{-48} &\bf{\mp 2} &  &  &  & \\ \hline\hline
 & -19 & 52 & 17 &  &  &  &  & \bf{\mp4} & \bf{\mp46} &  & \\ \hline
 &  & -33 & 53 & 30 &  &  &  & \bf{\mp47} & \bf{\mp3} &  & \\ \hline\hline
 &  &  & -20 & 54 & 16 &  &  &  &  & \bf{\mp6} & \bf{\mp44}\\ \hline
 &  &  &  & -34 & 62 & 22 &  &  &  & \bf{\mp45} & \bf{\mp 5}\\ \hline\hline
\bf{\mp7} & \bf{-42} &  &  &  & -28 & 57 & 21 &  &  &  & \\ \hline
\bf{43} & \bf{\pm8} &  &  &  &  & -29 & -56 & 35 &  &  & \\ \hline\hline
 &  & \bf{\pm10} &\bf{\pm 41} &  &  &  & -15 & -58 & 23 &  & \\ \hline
 &  & \bf{\pm40}& \bf{\pm9} &  &  &  &  & -27 & -59 & 36 & \\ \hline\hline
 &  &  &  & \bf{\pm12} & \bf{\pm39} &  &  &  & -14 & -60 & 24\\ \hline
37 &  &  &  & \bf{\pm38} & \bf{\pm11} &  &  &  &  & -26 & -61\\ \hline
\end{array}$$
Notice that the ordered list of the total sum of the rows and that of the columns of $C$ is:
\[
(0,0,0,0,0,0,+1,+1,+1,-1,+1,-1).
\]
Hence, we need to apply the final readjustment of some elements of the main diagonal of $C$ to gain the zero sum on every row and column (see the elements in bold):
$$\begin{array}{|r |r| |r |r| |r |r| |r |r| |r |r| |r |r|}\hline
-55 & 18 &  &  &  &  & 		\pm1 & 49 &  &  &  & -13\\ \hline
-32 & 51 & 31 &  &  &  & -48 &\mp 2 &  &  &  & \\ \hline\hline
 & -19 & 52 & 17 &  &  &  &  & \mp4 & \mp46 &  & \\ \hline
 &  & -33 & 53 & 30 &  &  &  & \mp47 & \mp3 &  & \\ \hline\hline
 &  &  & -20 & 54 & 16 &  &  &  &  & \mp6 & \mp44\\ \hline
 &  &  &  & -34 & 62 & 22 &  &  &  & \mp45 & \mp 5\\ \hline\hline
\mp7 & -42 &  &  &  & -28 & \mathbf{56} & 21 &  &  &  & \\ \hline
43 & \pm8 &  &  &  &  & -29 & \mathbf{-57} & 35 &  &  & \\ \hline\hline
 &  & \pm10 &\pm 41 &  &  &  & -15 & \mathbf{-59} & 23 &  & \\ \hline
 &  & \pm40 & \pm9 &  &  &  &  & -27 & \mathbf{-58} & 36 & \\ \hline\hline
 &  &  &  & \pm12 & \pm39 &  &  &  & -14 & \mathbf{-61} & 24\\ \hline
37 &  &  &  & \pm38 & \pm11 &  &  &  &  & -26 & \mathbf{-60}\\ \hline
\end{array}$$
It can now be seen that the obtained array is a strictly $\W\H_5(12;5)$.
\end{ex}

\section{The Archdeacon Embedding}
\label{sec:5}
This section focuses mainly on the connection between partially filled arrays and embeddings. To explain this link, we first recall some basic definitions, see \cite{Moh, MT}.
\begin{defi}
Given a graph $\Gamma$ and a surface $\Sigma$, an \emph{embedding} of $\Gamma$ in $\Sigma$ is a continuous injective mapping $\psi: \Gamma \rightarrow \Sigma$, where $\Gamma$ is viewed with the usual topology as $1$-dimensional simplicial complex.
\end{defi}
The connected components of $\Sigma \setminus \psi(\Gamma)$ are said $\psi$-\emph{faces}. Also, with abuse of notation, we say that a circuit $F$ of $\Gamma$ is a face (induced by the embedding $\psi$) if $\psi(F)$ is the boundary of a $\psi$-face. Then, if each $\psi$-face is homeomorphic to an open disc, the embedding $\psi$ is called \emph{cellular}.

Following \cite{GT}, we provide an equivalent, but purely combinatorial, definition of graph embedding into a surface.
Here, we denote by $D(\Gamma)$ the set of the oriented edges of the graph $\Gamma$ and, given a vertex $x$ of $\Gamma$, by $N(\Gamma,x)$ the neighborhood of $x$ in $\Gamma$.
\begin{defi}\label{DefEmbeddings}
Let $\Gamma$ be a connected multigraph. A \emph{combinatorial embedding} of $\Gamma$ is a triple $\Pi=(\Gamma,\epsilon,\rho)$ where $\epsilon:E(\Gamma)\rightarrow \{-1,1\}$ and $\rho: D(\Gamma)\rightarrow D(\Gamma)$ satisfies the following properties:
\begin{itemize}
\item[(a)] for any $y\in N(\Gamma,x)$, there exists $y'\in N(\Gamma,x)$ such that $\rho(x,y)=(x,y')$,
\item[(b)] we define $\rho_x$ as the permutation of $N(\Gamma,x)$ such that, given $y\in N(\Gamma,x)$, $\rho(x,y)=(x,\rho_x(y))$. Then the permutation $\rho_x$ is a cycle of order $|N(\Gamma,x)|$.
\end{itemize}
If properties (a) and (b) hold, the map $\rho$ is said to be a \emph{rotation} of $\Gamma$.
\end{defi}
Then, as reported in \cite{GT}, a combinatorial embedding $\Pi=(\Gamma,\epsilon,\rho)$ is equivalent to a cellular embedding $\psi$ of $\Gamma$ into a surface $\Sigma$.
Now we revisit the definition of the Archdeacon embedding in the case of weak Heffter arrays. We first introduce some notation.
Given a partially filled array $A$, we denote by $\E(A)$, $\E(R_i)$, $\E(C_j)$ the list of the elements of the filled cells of $A$, of the $i$-th row and of the $j$-th column, respectively.
By $\omega_{R_i}$ and $\omega_{C_j}$ we mean an ordering of, respectively, $\E(R_i)$ and $\E(C_j)$. We then define by $\omega_r=\omega_{R_1}\circ \cdots \circ \omega_{R_m}$ the ordering for the rows and by $\omega_c=\omega_{C_1}\circ \cdots \circ \omega_{C_n}$ the ordering for the columns.
\begin{ex}
Let $A$ be the weak Heffter array $\W\H(3,4)$ of Example \ref{ex:WeakA}:
$$\begin{array}{|c|c|c|c|} \hline
1 & -7 & -6 & 12 \\ \hline
2 & -4 & \pm 10 & \mp 8 \\ \hline
-3 & \mp 11 & \pm 9 & 5 \\ \hline
\end{array}$$

We have that $$\E(A)=\{1,-7,-6,12,2,-4,\pm 10,\mp 8,-3,\mp 11,\pm 9,5\}.$$
Recalling that the rows (resp. columns) are obtained by considering the upper (resp. lower) signs, we have that
\begin{center}
$\begin{array}{lcl}
\E(R_1)=\{1,-7,-6,12\}; & \quad &\E(C_1)=\{1,2,-3\};\\
\E(R_2)=\{2,-4,+10,-8\}; & \quad &\E(C_2)=\{-7,-4,+11\};\\
\E(R_3)=\{-3,-11,+9,5\};& \quad &\E(C_3)=\{-6,-10,-9\};\\
 & \quad &\E(C_4)=\{12,+8,5\}.
\end{array}$
\end{center}

 Here if we consider the natural orderings, from left to right for the rows and from top to bottom for the columns, we have that
\begin{center}
$\begin{array}{lcl}
\omega_{R_1}=(1,-7,-6,12); & \quad &\omega_{C_1}=(1,2,-3);\\
\omega_{R_2}=(2,-4,+10,-8); & \quad &\omega_{C_2}=(-7,-4,+11);\\
\omega_{R_3}=(-3,-11,+9,5);& \quad &\omega_{C_3}=(-6,-10,-9);\\
 & \quad &\omega_{C_4}=(12,+8,5).
\end{array}$
\end{center}
It follows that
$$ \omega_{r}=(1,-7,-6,12)(2,-4,+10,-8)(-3,-11,+9,5)$$
and
$$\omega_c=(1,2,-3)(-7,-4,+11)(-6,-10,-9)(12,+8,5).$$
\end{ex}

Let $A$ be a $\W\H(m,n;h,k)$, we denote by $\Theta(A)$ the $m\times n$ array obtained by deleting from $A$ the elements of type $\pm x$ or $\mp x$. Then we denote by $\Omega(A)$ the complement of $\Theta(A)$ in $A$. We note that
$$\mathbb{Z}_{2nk+1}\setminus \{0\}=\pm\E(\Theta(A)) \cup \E(\Omega(A)).$$
Therefore, we can define the map $\lambda: \Z_{2nk+1}\setminus\{0\} \rightarrow \{1,-1\}$ such that
$$\lambda(a)=\begin{cases}1 \mbox{ if }a\in \pm \E(\Theta(A));\\
-1 \mbox{ if }a\in \E(\Omega(A)).
\end{cases}
$$
\begin{ex}
We consider again the array $A$ of Example \ref{ex:WeakA}. Here we have that:
\begin{center}
$\Theta(A)=\begin{array}{|c|c|c|c|} \hline
1 & -7 & -6 & 12 \\ \hline
2 & -4 & & \\ \hline
-3 & & & 5 \\ \hline
\end{array}
\ \ \ 
\Omega(A)=\begin{array}{|c|c|c|c|} \hline
& & & \\ \hline
& & \pm 10 & \mp 8 \\ \hline
& \mp 11 & \pm 9 & \\ \hline
\end{array}
$
\end{center}
Note that, since $A$ is a $\W\H(3,4)$,
$$\mathbb{Z}_{25}\setminus \{0\}=\pm\E(\Theta(A)) \cup \E(\Omega(A)).$$
Here we have that
$\lambda(x)=1$ whenever $x\in \pm\{1,-7,-6,12,2,-4,-3,5\}$ and $-1$ otherwise.
\end{ex}
\begin{defi}\label{sequence}
Let $A$ be a $\W\H(m,n;h,k)$ and let us consider the ordering $\omega_r$ for the rows and $\omega_c$ for the columns. Note that we can assume, without loss of generality, that the cell $(1,1)$ is filled. We denote by $a_1$ the element contained in the cell $(1,1)$, considered with its row sign so that $\omega_r(a_1)$ and $\omega_r^{-1}(a_1)$ are well-defined. Then, we define recursively
\begin{eqnarray}\label{ArchEmb}
a_{i+1}&=&\begin{cases}
-\omega_c^{\mu_{i}}(\lambda(a_i)a_i)\mbox{ if  $i$ is odd};\\
\omega_r^{\mu_{i}}(-\lambda(a_i) a_i)\mbox{ if $i$ is even}\\
\end{cases}
\end{eqnarray}
 where $\mu_1=\lambda(a_1)$ and
\begin{eqnarray}\label{Memoria}
\mu_{i+1}&=&\begin{cases}
\mu_i\mbox{ if } \lambda(a_{i+1})=1;\\
-\mu_i\mbox{ if } \lambda(a_{i+1})=-1.
\end{cases}
\end{eqnarray}
From now on we will use the notation $(b_i)$ to indicate the sequence of elements $(b_1,b_2,\cdots)$.
In this discussion we prefer to avoid introducing the concept of a current graph, but, following \cite{A}, the sequence $(a_i)$ obtained from Equation (\ref{ArchEmb}) defines the face in the embedding of the current graph, see \cite{GT}, constructed (with the face-trace algorithm) starting from the edge associated with $a_1$. Here, using the notations of \cite{A}, the sequence $(\mu_i)$ defined in Equation (\ref{Memoria}) keeps track of the local sense of orientation (that is either anticlockwise or clockwise).
\end{defi}

\begin{prop}\label{CriterioCompatibility}
Given a $\W\H(m,n;h,k)$, the following facts are equivalent:
\begin{itemize}
\item[(1)] The sequence $((a_i,\mu_i))$ has period $2nk$;
\item[(2)] The first $2nk$ elements of $(a_i\mu_i)$ are all distinct;
\item[(3)] The sequence $(a_i\mu_i)$ has period $2nk$;
\item[(4)] The sequence $((a_{2i+1},\mu_{2i+1}))$ has period $nk$.
\end{itemize}
\end{prop}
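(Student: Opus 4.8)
The plan is to treat the recursion of Definition \ref{sequence} as the iteration of a single map and to show that all four statements are reformulations of the property that this map has one orbit of maximal length. First I would record that, once the parity of the index is carried along as part of the data, the passage $(a_i,\mu_i)\mapsto(a_{i+1},\mu_{i+1})$ is \emph{invertible}. Indeed, from $a_{i+1}$ one recovers $z=\lambda(a_i)a_i$ by applying $\omega_c^{-\mu_i}$ or $\omega_r^{-\mu_i}$ (the orderings are permutations), and since $\lambda(x)=\lambda(-x)$ one recovers $a_i=\lambda(z)z$; the memory is inverted by $\mu_i=\mu_{i+1}\lambda(a_{i+1})$ using $\lambda^2=1$. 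Hence the sequence of states is \emph{purely periodic}, and so are $((a_i,\mu_i))$, $(a_i\mu_i)$ and the odd-indexed subsequence. Because each step toggles the row/column parity, any period is even; write $L$ for the common period of the state sequence.

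The heart of the matter is the identification, built into Definition \ref{sequence}, of $a_i\mu_i$ with the oriented edge (dart) traversed at the $i$-th step of the face-tracing of the current-graph embedding determined by $(\omega_r,\omega_c)$, the memory $\mu_i$ recording the local sense of orientation so that the product carries the correct sign even across the non-orientable turns coming from the cells of $\Omega(A)$. Granting this, I would invoke the general fact that a face-boundary walk meets each dart at most once: over one period the values $a_i\mu_i$ are pairwise distinct. Since $a_i\mu_i\in\Z_{2nk+1}\setminus\{0\}$ and there are exactly $2nk$ such darts, this forces $L\le 2nk$, with $L=2nk$ precisely when the trace is a single face using every dart.

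With this in hand the four equivalences become bookkeeping. Distinct products over a period force distinct pairs $(a_i,\mu_i)$ (equal pairs would give equal products), so the period of $((a_i,\mu_i))$ equals $L$, and statement (1) reads $L=2nk$. The product sequence has period $L$ and visits $L$ distinct residues, so statement (3) also reads $L=2nk$, while statement (2) says the first $2nk$ products are distinct, i.e.\ $L\ge 2nk$, which together with $L\le 2nk$ is again $L=2nk$. Finally, stepping by two along the even-length orbit, the odd-indexed subsequence has period $L/2$ and, having constant parity, injects into its pairs, so statement (4) reads $L/2=nk$. All four conditions are therefore equivalent to $L=2nk$.

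The step I expect to be the genuine obstacle is the middle one: proving that $a_i\mu_i$ really is a faithful, orientation-correct labeling of the darts, equivalently that the products are pairwise distinct within a period. This is exactly where the weak structure matters, since the sign flips governed by $\lambda$ and accumulated in $\mu$ must be shown to compose into the face permutation even on a non-orientable surface. I would establish it by checking that the map $x\mapsto\lambda(x)x$ interchanges the row- and column-sign conventions consistently, and that, together with $\omega_r,\omega_c$ and the sign memory $\mu$, the alternating steps reproduce the next-dart-around-a-face permutation. Once this correctness lemma is in place, the invertibility and counting arguments above conclude the proof with no further input.
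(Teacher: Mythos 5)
Your overall architecture is sound, and on the periphery it matches the paper: the invertibility of the step map (the paper records the inverse recursion explicitly as Equation (\ref{ArchEmb2})), the resulting pure periodicity, and the bookkeeping reducing (2), (3) and (4) to statements about a single period length are essentially the paper's implications $(2)\Rightarrow(3)\Rightarrow(1)$ and $(1)\Leftrightarrow(4)$. The problem is the step you yourself flag as the genuine obstacle: the claim that the values $a_i\mu_i$ are pairwise distinct within one period. You propose to obtain it by identifying $(a_i\mu_i)$ with the darts met by a face-boundary walk of an embedded current graph and invoking the fact that a face trace meets each dart at most once. But establishing that identification --- that the alternating row/column steps, the sign corrections by $\lambda$, and the accumulated memory $\mu$ compose into the next-dart-around-a-face permutation of a possibly \emph{non-orientable} current-graph embedding, where the face-tracing states are darts together with a local orientation rather than darts alone --- is precisely the content that must be proved, and you only describe how you ``would'' check it. As written, the appeal to ``the general fact'' assumes the conclusion.

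The paper deliberately avoids the current-graph formalism and proves the no-collision property directly, and this is where all the work lies. Assuming $a_i\mu_i=a_j\mu_j$ with $1\le i<j\le 2nk$ and $j-i$ minimal, it first shows that $j-i>1$ and that a collision with $i\equiv j\pmod 2$ would force $(a_i,\mu_i)=(a_j,\mu_j)$ and hence a period dividing $j-i<2nk$; it then disposes of the case $i\not\equiv j\pmod 2$ by a four-case computation (according to the parities of $i$ and $j$ and to whether $a_i=a_j\in\E(\Omega(A))$ or $a_i=-a_j\in\pm\E(\Theta(A))$), using the inverse recursion together with the identities $\lambda(x)=\lambda(-x)$ and $\mu_j=\pm\mu_{j-1}$ according to the type of $a_j$, to deduce $a_{i+1}\mu_{i+1}=a_{j-1}\mu_{j-1}$ and contradict minimality. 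That case analysis is the bulk of the paper's proof and is exactly what is missing from yours; until you supply it (or a genuinely complete current-graph correctness lemma replacing it), the proposal has a gap at its crux.
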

\begin{proof}
Let $A$ be a $\W\H(m,n;h,k)$. Firstly, note that from Equation \eqref{ArchEmb}, it follows that:
\begin{eqnarray}\label{ArchEmb2}
a_{i-1}&=&\begin{cases}
-\lambda(\omega_r^{-\mu_{i-1}}(a_i))\omega_r^{-\mu_{i-1}}(a_i)\mbox{ if  $i$ is odd};\\
\lambda(\omega_c^{-\mu_{i-1}}(-a_i))\omega_c^{-\mu_{i-1}}(-a_i)\mbox{ if $i$ is even}.
\end{cases}
\end{eqnarray}
$(1)\Rightarrow (2)$.
Let us suppose, by contradiction, that $(1)$ holds but $(2)$ does not.
This means that there exist $i$ and $j$, with $2nk\geq j>i\geq 1$ (which implies $j-i<2nk$), such that $a_i\mu_i=a_j\mu_j$. Also, let $i$ and $j$ be indexes with this property with minimum difference.
Due to the definition of the sequences $(a_i)$ and $(\mu_i)$, if $a_i=a_j$, $\mu_i=\mu_j$ and $i\equiv j \pmod{2}$, then $T|(j-i)$ where $T=2nk$ is the period of the sequence $((a_i,\mu_i))$.
Moreover, because of the definition of $(a_i)$, if $a_i=-a_j$ we must have that $i\not\equiv j\pmod{2}$. In both cases, if $a_i\mu_i=a_j\mu_j$ and $j-i<2nk$, we must have that $i\not\equiv j\pmod{2}$. Furthermore, since in each row and each column we have more than one element, $j-i>1$.

Here we have four possible cases:
\begin{itemize}
\item[a)] $(a_i,\mu_i)=(a_j,\mu_j)$, $i$ is odd and $j$ is even;
\item[b)] $(a_i,\mu_i)=(a_j,\mu_j)$, $i$ is even and $j$ is odd;
\item[c)] $(a_i,\mu_i)=(-a_j,-\mu_j)$, $i$ is odd and $j$ is even;
\item[d)] $(a_i,\mu_i)=(-a_j,-\mu_j)$, $i$ is even and $j$ is odd.
\end{itemize}
Note that the first two cases can occur when $a_i=a_j\in \E(\Omega(A))$ while the latter two cases when $a_i=-a_j\in \pm\E(\Theta(A))$.

CASE a). Let us assume $(a_i,\mu_i)=(a_j,\mu_j)$ where $i$ is odd and $j$ is even. Then
$$a_{i+1}=-\omega_c^{\mu_{i}}(\lambda(a_i)a_i).$$
Since $\mu_{i+1}=\mu_i \lambda(-\omega_c^{\mu_{i}}(\lambda(a_i)a_i))$, it follows that
$$a_{i+1}\mu_{i+1}=-\omega_c^{\mu_{i}}(\lambda(a_i)a_i)\mu_i \lambda(-\omega_c^{\mu_{i}}(\lambda(a_i)a_i)).$$
Moreover, since $a_i\in \E(\Omega(A))$, $\lambda(a_i)=-1$, and hence we have that
$$a_{i+1}\mu_{i+1}=-\omega_c^{\mu_{i}}(-a_i)\mu_i \lambda(-\omega_c^{\mu_{i}}(-a_i)).$$

On the other hand, we have that
$$a_{j-1}=\lambda(\omega_c^{-\mu_{j-1}}(-a_j))\omega_c^{-\mu_{j-1}}(-a_j).$$
Also, since $a_j\in \E(\Omega(A))$, $\mu_j=-\mu_{j-1}$ and hence
$$a_{j-1}\mu_{j-1}=-\mu_j\lambda(\omega_c^{\mu_j}(-a_j))\omega_c^{\mu_{j}}(-a_j).$$
Finally from $(a_j,\mu_j)=(a_i,\mu_i)$ and $ \lambda(-\omega_c^{\mu_{i}}(-a_i))= \lambda(\omega_c^{\mu_{i}}(-a_i))$, it follows that:
$$a_{j-1}\mu_{j-1}=-\mu_j\lambda(\omega_c^{\mu_j}(-a_j))\omega_c^{\mu_{j}}(-a_j)=-\mu_i\lambda(-\omega_c^{\mu_i}(-a_i))\omega_c^{\mu_{i}}(-a_i)=a_{i+1}\mu_{i+1}.$$
Note that this is a contradiction since we are assuming that $i$ and $j$ are at a minimal distance and $j-i>1$.

CASE b). Let us assume $(a_i,\mu_i)=(a_j,\mu_j)$ where $i$ is even and $j$ is odd. Then,
$$a_{i+1}=\omega_r^{\mu_{i}}(-\lambda(a_i) a_i).$$
Since $\mu_{i+1}=\mu_i\lambda(\omega_r^{\mu_{i}}(-\lambda(a_i) a_i))$ and $\lambda(a_i)=-1$,  we have that
$$a_{i+1}\mu_{i+1}=\mu_i\lambda(\omega_r^{\mu_{i}}( a_i))\omega_r^{\mu_{i}}(a_i).$$

On the other hand we have that
$$a_{j-1}=-\lambda(\omega_r^{-\mu_{j-1}}(a_j))\omega_r^{-\mu_{j-1}}(a_j).$$
Also, since $a_j\in \E(\Omega(A))$, $\mu_j=-\mu_{j-1}$ and hence
$$a_{j-1}\mu_{j-1}=\mu_j\lambda(\omega_r^{\mu_{j}}(a_j))\omega_r^{\mu_{j}}(a_j).$$
Finally from  $(a_j,\mu_j)=(a_i,\mu_i)$, it follows that:
$$a_{j-1}\mu_{j-1}=\mu_j\lambda(\omega_r^{\mu_{j}}(a_j))\omega_r^{\mu_{j}}(a_j)=\mu_i\lambda(\omega_r^{\mu_{i}}(a_i))\omega_r^{\mu_{i}}(a_i)=a_{i+1}\mu_{i+1}.$$
Note that this is a contradiction since we are assuming that $i$ and $j$ are at a minimal distance and $j-i>1$.

CASE c). Let us assume $(a_i,\mu_i)=(-a_j,-\mu_j)$ where $i$ is odd and $j$ is even. Then
$$a_{i+1}=-\omega_c^{\mu_{i}}(\lambda(a_i)a_i).$$
Since $\mu_{i+1}=\mu_i \lambda(-\omega_c^{\mu_{i}}(\lambda(a_i)a_i))$ and $\lambda(a_i)=1$, we have that
$$a_{i+1}\mu_{i+1}=-\omega_c^{\mu_{i}}(a_i)\mu_i \lambda(-\omega_c^{\mu_{i}}(a_i)).$$

On the other hand, we have that
$$a_{j-1}=\lambda(\omega_c^{-\mu_{j-1}}(-a_j))\omega_c^{-\mu_{j-1}}(-a_j).$$
Also, since $a_j\in \pm \E(\Theta(A))$, $\mu_j=\mu_{j-1}$ and hence
$$a_{j-1}\mu_{j-1}=\mu_j\lambda(\omega_c^{-\mu_j}(-a_j))\omega_c^{-\mu_{j}}(-a_j).$$
Finally from  $(-a_j,-\mu_j)=(a_i,\mu_i)$ and $ \lambda(-\omega_c^{\mu_{i}}(-a_i))= \lambda(\omega_c^{\mu_{i}}(-a_i))$, it follows that:
$$a_{j-1}\mu_{j-1}=\mu_j\lambda(\omega_c^{-\mu_j}(-a_j))\omega_c^{-\mu_{j}}(-a_j)=-\mu_i\lambda(-\omega_c^{\mu_i}(a_i))\omega_c^{\mu_{i}}(a_i)=a_{i+1}\mu_{i+1}.$$
Note that this is a contradiction since we are assuming that $i$ and $j$ are at a minimal distance and $j-i>1$.

CASE d). Let us assume $(a_i,\mu_i)=(-a_j,-\mu_j)$ where $i$ is even and $j$ is odd. Then,
$$a_{i+1}=\omega_r^{\mu_{i}}(-\lambda(a_i) a_i).$$
Since $\mu_{i+1}=\mu_i\lambda(\omega_r^{\mu_{i}}(-\lambda(a_i) a_i))$ and $\lambda(a_i)=1$,  we have that
$$a_{i+1}\mu_{i+1}=\mu_i\lambda(\omega_r^{\mu_{i}}(-a_i))\omega_r^{\mu_{i}}(-a_i).$$

On the other hand we have that
$$a_{j-1}=-\lambda(\omega_r^{-\mu_{j-1}}(a_j))\omega_r^{-\mu_{j-1}}(a_j).$$
Also, since $a_j\in \pm\E(\Theta(A))$, $\mu_j=\mu_{j-1}$ and hence
$$a_{j-1}\mu_{j-1}=-\mu_j\lambda(\omega_r^{-\mu_{j}}(a_j))\omega_r^{-\mu_{j}}(a_j).$$
Finally from  $(-a_j,-\mu_j)=(a_i,\mu_i)$, it follows that:
$$a_{j-1}\mu_{j-1}=-\mu_j\lambda(\omega_r^{-\mu_{j}}(a_j))\omega_r^{-\mu_{j}}(a_j)=\mu_i\lambda(\omega_r^{\mu_{i}}(-a_i))\omega_r^{\mu_{i}}(-a_i)=a_{i+1}\mu_{i+1}.$$
Note that this is a contradiction since we are assuming that $i$ and $j$ are at a minimal distance and $j-i>1$.

Therefore $a_i\mu_i=a_j\mu_j$ can hold only assuming $j\equiv i\pmod{2}$ and hence, as noted above, $(a_i,\mu_i)=(a_j,\mu_j)$. 
It follows that $(a_{i+\ell},\mu_{i+\ell})=(a_{j+\ell},\mu_{j+\ell})$ for any positive integer $\ell$, 
which means that $T|(j-i)$ where $T=2nk$ is the period of $((a_i,\mu_i))$,
 but this is in contradiction with the assumption that $2nk\geq j>i\geq 1$. 
Thus, assuming $(1)$, i.e. $T=2nk$, the first $2nk$ elements of ($a_i\mu_i$) are distinct.

$(2)\Rightarrow (3)$. Assuming that $(2)$ holds, the first $2nk$ elements of $(a_i\mu_i)$ are different. Hence the period $T'$ of $(a_i\mu_i)$ is larger than or equal to $2nk$. Let us consider the element $a_{2nk+1}\mu_{2nk+1}$. Since it is a nonzero element of $\mathbb{Z}_{2nk+1}$, it must be equal to $a_j\mu_j$ for some $j\in [1,2nk]$. Moreover, due to the previous discussion,
$a_{2nk+1}\mu_{2nk+1}=a_j\mu_j$ can occur only if $j\equiv 2nk+1 \pmod{2}$ and $(a_i,\mu_i)=(a_j,\mu_j)$ which implies that $T|(2nk+1-j)$ where $T$ is the period of $((a_i,\mu_i))$. Note that $T$ is a multiple of the period $T'$ of the sequence $(a_i\mu_i)$. It follows that
$$2nk \leq T'\leq T\leq 2nk+1-j\leq 2nk.$$ This can occur only if $j=1$ and if the period of $(a_i\mu_i)$ is exactly $2nk$. Therefore also property $(3)$ holds.

$(3)\Rightarrow (1)$. By hypothesis, $a_1\mu_1=a_{2nk+1}\mu_{2nk+1}$. Since also $2nk+1\equiv 1\pmod{2}$, we have that $(a_1,\mu_1)=(a_{2nk+1},\mu_{2nk+1})$ which implies that $(a_{1+\ell},\mu_{1+\ell})=(a_{2nk+1+\ell},\mu_{2nk+1+\ell})$ for any positive integer $\ell$ and hence the period $T$ of $((a_i,\mu_i))$ is at most $2nk$. On the other hand, since $T$ is a multiple of the period $T'$ of $(a_i\mu_i)$, we have that
$$2nk\geq T\geq T'=2nk.$$ It follows that $T$ is exactly $2nk$ and hence $(1)$ holds.

$(1)\Rightarrow (4)$.
Since the period $T$ of $((a_i,\mu_i))$ is $2nk$, we have that $(a_{2i+1},\mu_{2i+1})$ has period $T'$ that is a divisor of $nk$. On the other hand, given $(a_{2i+1},\mu_{2i+1})=(a_{2j+1},\mu_{2j+1})$, since $2i+1\equiv 2j+1\pmod{2}$, we have that $(a_{2i+1+\ell},\mu_{2i+1+\ell})=(a_{2j+1+\ell},\mu_{2j+1+\ell})$ for any positive integer $\ell$.
It follows that $2(j-i)$ is a multiple of the period $T=2nk$ of $((a_i,\mu_i))$ and hence $(j-i)\geq nk$. Therefore the period of $(a_{2i+1},\mu_{2i+1})$ is exactly $nk$.

$(4)\Rightarrow (1)$. Since the period $T'$ of $(a_{2i+1},\mu_{2i+1})$ is $nk$, then the period $T$ of $(a_{i},\mu_{i})$ must be a multiple of $2nk$. Moreover, since $(a_{2nk+1},\mu_{2nk+1})=(a_{1},\mu_{1})$, and $1\equiv 2nk+1\pmod{2}$, we have that $(a_{2nk+1+\ell},$ $\mu_{2nk+1+\ell})=(a_{1+\ell},\mu_{1+\ell})$ for any positive integer $\ell$. Therefore, the period of $(a_{i},\mu_{i})$ must be exactly $2nk$.
\end{proof}
Now we can use the sequence $((a_i,\mu_i))$ in order to generalize the definition of compatible orderings given in \cite{A}.
\begin{defi}
Let $A$ be a $\W\H(m,n;h,k)$ and consider the ordering $\omega_r$ for the rows and $\omega_c$ for the columns. Then we say that $\omega_r$ and $\omega_c$ are \emph{compatible} whenever the conditions of Proposition \ref{CriterioCompatibility} are satisfied.
\end{defi}

\begin{defi}[Archdeacon embedding]\label{ArchdeaconEmbedding}
Let $A$ be a $\W\H(m,n;h,k)$ that admits compatible orderings $\omega_r$ and $\omega_c$.
According to Proposition \ref{CriterioCompatibility}, the following list can be seen as a cyclic permutation of $\Z_{2nk+1}\setminus \{0\}$
$$ \rho_0=(\mu_1 a_1, \mu_2 a_2,\dots, \mu_{2nk}a_{2nk}).$$

Then we define the map $\rho$ on the set of the oriented edges of the complete graph $K_{2nk+1}$ as follows
\begin{eqnarray}\label{ArchRho}
\rho((x,x+a))&=& (x,x+\rho_0(a)).
\end{eqnarray}
Clearly, given compatible orderings $\omega_r$ and $\omega_c$, the map $\rho$ is a rotation of $K_{2nk+1}$. 

Note that, considering the list $L=(a_1, a_2,\dots, a_{2nk})$, we may have repetitions.
So we define the map $\epsilon: E(K_{2nk+1})\rightarrow \{-1,1\}$ as follows:
\begin{equation}\label{epsilon}\epsilon (x,x+a)=\begin{cases}1 \mbox{ if } a \mbox{ appears once in } L;\\ -1 \mbox{ otherwise}.
\end{cases} \end{equation}
\end{defi}

Archdeacon \cite{A} proved that the following theorem holds.

\begin{thm}\label{HeffterBiemb} Let $A$ be a $\W\H(m,n;h,k)$ that admits two compatible orderings $\omega_r$ and $\omega_c$. Then there exists a cellular biembedding $\psi$ of $K_{2nk+1}$, such that every edge is on a face whose boundary length is $h$ and on a face whose boundary length is $k$. Moreover, $\psi$ is $\Z_{2nk+1}$-regular.
\end{thm}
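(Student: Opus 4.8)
The plan is to take the combinatorial embedding $\Pi=(K_{2nk+1},\epsilon,\rho)$ of Definition \ref{ArchdeaconEmbedding} as the sought $\psi$ and then to identify its faces with the rows and columns of $A$. First I would check that $\rho$ really is a rotation: since $\omega_r$ and $\omega_c$ are compatible, Proposition \ref{CriterioCompatibility}(3) tells us that the list $\rho_0=(\mu_1a_1,\dots,\mu_{2nk}a_{2nk})$ runs through every element of $\Z_{2nk+1}\setminus\{0\}$ exactly once, so at each vertex $x$ the assignment $a\mapsto\rho_0(a)$ is a single cycle on the $2nk$ darts issuing from $x$. Together with $\epsilon$ from (\ref{epsilon}), the equivalence recorded in \cite{GT} then turns $\Pi$ into a cellular embedding $\psi$ of $K_{2nk+1}$ into some closed surface $\Sigma$.

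Next I would prove $\Z_{2nk+1}$-regularity, which is the easy structural part. The group acts on the vertex set $\Z_{2nk+1}$ by the translations $x\mapsto x+g$. By (\ref{ArchRho}) we have $\rho((x,x+a))=(x,x+\rho_0(a))$, and by (\ref{epsilon}) the value $\epsilon(x,x+a)$ depends only on the difference $a$; hence every translation fixes the pair $(\epsilon,\rho)$ and is therefore an automorphism of $\Pi$. As the translation action is sharply transitive on the vertices, $\psi$ is $\Z_{2nk+1}$-regular, and all faces split into translation orbits, so it is enough to analyse the faces incident with a single vertex.

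The core step is the face analysis, and here the two face-lengths come directly from the array. Reading the entries of a fixed row $R_i$ in the order $\omega_{R_i}$ produces a closed walk $0,\,b_1,\,b_1+b_2,\dots$ in $K_{2nk+1}$ that returns to its start after exactly $h$ steps, because the entries of $R_i$ sum to $0$ by $(\rm{c_1})$; its $\Z_{2nk+1}$-orbit yields $2nk+1$ faces of boundary length $h$, and ranging over the $m$ rows gives $(2nk+1)m$ such $h$-faces. Dually, each of the $n$ columns sums to $0$ and contributes faces of length $k$, for a total of $(2nk+1)n$ of them. By condition $(\rm{b_1})$ the difference that labels any given edge occurs in exactly one filled cell of $A$, hence in exactly one row and one column, so that edge lies on precisely one $h$-face and one $k$-face; this is the claimed biembedding. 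The purpose of compatibility, encoded in the sequence $((a_i,\mu_i))$ and Proposition \ref{CriterioCompatibility}, is exactly to guarantee that the one rotation $\rho_0$ simultaneously realises all these row-walks and all these column-walks as the genuine faces of $\psi$.

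The point I expect to require the most care is verifying this last guarantee in the non-orientable setting, i.e.\ that the boundary-tracing algorithm for $(\epsilon,\rho)$ actually returns the above walks once the signature is taken into account. Concretely, one must check that the sign $\mu_i$ produced by the recursion (\ref{ArchEmb})--(\ref{Memoria}) agrees with the local reversals of orientation recorded by $\epsilon$, so that a walk following the row (resp.\ column) rule closes up precisely when its partial sum returns to zero. The four parity cases already isolated in the proof of Proposition \ref{CriterioCompatibility} supply the case analysis needed for this matching. Finally, a consistency check via Euler's formula --- $V=2nk+1$, $E=(2nk+1)nk$, and $F=(2nk+1)(m+n)$ coming from $(2nk+1)m$ faces of length $h$ and $(2nk+1)n$ of length $k$, using $mh=nk$ --- confirms that these walks exhaust the faces of a single closed surface $\Sigma$, completing the argument.
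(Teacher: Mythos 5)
You should first be aware that the paper does not prove Theorem \ref{HeffterBiemb} at all: it states it as a result of Archdeacon and justifies it by the citation \cite{A}, recording afterwards (again with reference to \cite{A} and \cite{GT}) only the face descriptions \eqref{F1} and \eqref{F2}. So the comparison here is with the intended argument rather than with a written proof. Your outline does follow that intended route: compatibility makes $\rho_0$ a single $2nk$-cycle on $\Z_{2nk+1}\setminus\{0\}$, so $\rho$ in \eqref{ArchRho} is a rotation; $\rho$ and $\epsilon$ depend only on edge differences, so the translations are automorphisms and the embedding is $\Z_{2nk+1}$-regular; the candidate faces are the row- and column-walks \eqref{F1} and \eqref{F2}, which close after $h$ resp. $k$ steps because rows and columns sum to zero, and condition $(\mathrm{b}_1)$ puts each edge on exactly one face of each kind.

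The gap is exactly where you locate it, and you do not close it. Everything in the theorem beyond regularity rests on the claim that the boundary-tracing algorithm for the signed pair $(\epsilon,\rho)$, started at a dart $(x,x+a)$, reproduces the row ordering when $a\in\E(R)$ and the column ordering when $-a\in\E(C)$; this is what forces the faces to be \eqref{F1} and \eqref{F2} rather than some other closed walks. You assert that the signs $\mu_i$ of \eqref{Memoria} ``agree with'' the reversals recorded by $\epsilon$ in \eqref{epsilon} and point to the four parity cases in the proof of Proposition \ref{CriterioCompatibility}, but those cases establish injectivity of the sequence $(a_i\mu_i)$, not the face structure of the derived embedding; no computation is given relating $\rho^{\pm1}(x+a,x)$, the accumulated product of $\epsilon$ along the partial boundary, and the recursion \eqref{ArchEmb}. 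Without that verification the degree-sum/Euler count at the end proves nothing, since it only shows that \emph{if} the row- and column-walks are faces then they exhaust the faces. A complete write-up must either carry out this trace explicitly (essentially re-deriving Archdeacon's derived-embedding computation) or, as the paper does, cite \cite{A} for it. Two smaller inaccuracies: it is the distinctness statement (2) of Proposition \ref{CriterioCompatibility}, not (3) alone, that makes $\rho_0$ a single cycle; and compatibility is needed already for $\rho$ to be a rotation at all, not merely to ``realise the walks as faces''.
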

We are also interested in describing the faces (and their lengths) induced by the Archdeacon embedding under the condition of Theorem \ref{HeffterBiemb}.

For this purpose, we take a $\W\H(m,n;h,k)$ that admits two compatible orderings $\omega_r$ and $\omega_c$.
Since an embedding of Archdeacon type can be seen as a special case of derived embedding from current graphs (see \cite{A} and \cite{GT}), given $a\in \E(R)$ for a suitable row $R$, the oriented edge $(x,x+a)$ belongs to the face $F_1$ whose boundary is
\begin{equation}\label{F1}\left(x,x+a,x+a+\omega_r(a),\ldots,x+\sum_{i=0}^{k-2} \omega_r^i(a)\right).\end{equation}
Let us now consider the oriented edge $(x,x+a)$ with $-a \in \E(C)$ for a suitable column $C$.
Then $(x,x+a)$ belongs to the face $F_2$ whose boundary is
\begin{equation}\label{F2}\left(x,x+\sum_{i=1}^{h-1}\omega_{c}^{-i}(-a),x+\sum_{i=1}^{h-2}\omega_{c}^{-i}(-a),
\dots,x+\omega_{c}^{-1}(-a)\right).\end{equation}
\begin{ex}
Let $A$ be the weak Heffter array $\W\H(3,4)$ of Example \ref{ex:WeakA}:
$$
\begin{array}{|c|c|c|c|} \hline
1 & -7 & -6 & 12 \\ \hline
2 & -4 & \pm 10 & \mp 8 \\ \hline
-3 & \mp 11 & \pm 9 & 5 \\ \hline
\end{array}
$$
On the rows of $A$, we consider the natural orderings (from left to right).
Similarly we consider the natural orderings (from top to bottom) on the columns $C_3$ and $C_4$, and its opposite on $C_1$ and $C_2$.
Here we have that the sequence $(a_1,\dots,a_{24})$ is
$$(1,3,-11,7,1,-2,-8,-5,-3,-2,-4,7,-6,10,-4,-11,9,10,-8,-12,-6,9,5,-12)$$
while $(\mu_1,\dots,\mu_{24})$ is
$$(1,1,-1,-1,-1,-1,1,1,1,1,1,1,1,-1,-1,1,-1,1,-1,-1,-1,1,1,1).$$
It follows that the rotation $\rho_0$ is given by the cycle:
$$(1,3,11,-7,-1,2,-8,-5,-3,-2,-4,7,-6,-10,4,-11,-9,10,8,12,6,9,5,-12).$$
Finally, in order to define the embedding, we need also to define the map $\epsilon: E(K_{2nk+1})\rightarrow \{-1,1\}$. According to Equation \eqref{epsilon}, this map acts as follows:
$$\epsilon (x,x+a)=\begin{cases}1 \mbox{ if } a \in\pm \{3,5\};\\ -1 \mbox{ otherwise}.
\end{cases} $$
\end{ex}
\section{An infinite class of non-orientable embeddings}
\label{sec:6r}
Embeddings of Archdeacon type into orientable surfaces have been considered by several papers (see, for instance, \cite{CDY, CMPPHeffter, CPPBiembeddings, DM}) but, apart from a single example exhibited in \cite{A}, no one has investigated such embeddings in the non-orientable case yet.
For this reason, in this section, we aim to present an infinite family of non-orientable embeddings of Archdeacon type which arise from weak Heffter arrays.
\subsection{Crazy Knight's Tour Problem}
Proposition \ref{CriterioCompatibility} leads us to consider the following problem which can be seen as a generalization of the original Crazy Knight's Tour Problem proposed in \cite{CDP}.
Given a $\W\H(m,n;h,k)$, say $A$, by $r_i$ we denote the orientation of the $i$-th row,
precisely $r_i=1$ if it is from left to right and $r_i=-1$ if it is from right to left. Analogously, for the $j$-th
column, if its orientation $c_j$ is from top to bottom then $c_j=1$ otherwise $c_j=-1$. Assume that the orientations
$\R=(r_1,\dots,r_m)$
and $\C=(c_1,\dots,c_n)$ are fixed.
Now we consider the following tour on two identical copies of the array $A$ that we denote by $A_1$ and by $A_{-1}$.
More precisely, we first denote by $Skel(A)$ the set of non-empty cells of $A$. Then we index the nonempty cells of $A_1$ with the triples $(i,j,1)$ where $(i,j)\in Skel(A)$. Similarly, we index the nonempty cells of $A_{-1}$ using the triples $(i,j,-1)$ where $(i,j)\in Skel(A)$.
Finally, given a cell $(i,j,t)$ in the array $A_t$ (here $t$ is either $1$ or $-1$) we consider the moves:
\begin{itemize}
\item[1)] $L_{\R}(i,j,t)$ is the cell $(i,j',t')$ where $j'$ is the column index of the filled cell of the row $R_{i}$ next to $(i,j)$ in the orientation $r_{i}^{t}$ and $t'=t$ if $(i,j')\in Skel(\Theta(A))$ and $t'=-t$ if $(i,j')\in Skel(\Omega(A))$.

\item[2)] $L_{\C}(i,j,t)$ is the cell $(i',j,t')$ where $i'$ is the row index of the filled cell of the column $C_j$ next to $(i,j)$ in the orientation $c_{j}^{t}$ and $t'=t$ if $(i',j)\in Skel(\Theta(A))$ and $t'=-t$ if $(i,j')\in Skel(\Omega(A))$.
\end{itemize}
Then, assuming $(1,1)\in Skel(A)$ and setting $$t=\begin{cases}1\mbox{ if }(1,1)\in Skel(\Theta(A))\\-1 \mbox{ otherwise}\end{cases}$$ we consider the list
$$ L_{\C,\R}=((1,1,t),L_{\R}\circ L_{\C}(1,1,t),\ldots,(L_{\R}\circ L_{\C})^{\ell}(1,1,t))$$
where $\ell$ is the minimum value such that $(L_{\R}\circ L_{\C})^{\ell+1}(1,1,t)=(1,1,t).$
The problem we propose here is the following.
\begin{KN}
Given a weak Heffter array $A$, do there exist $\C$ and $\R$ such that the list $L_{\C,\R}$ has length $|Skel(A)|$?
\end{KN}

Clearly the \probname\ can be stated more in general for a partially filled array $A$ and it is denoted by $P(A)$.
Here, if $L_{\C,\R}$ has length $|Skel(A)|$ we say that $(\C,\R)$ is a solution of $P(A)$.
\begin{ex}\label{solA}
Let $A$ be again the $\W\H(3,4)$ of Example \ref{ex:WeakA}.
Since the arrays $A_1$ and $A_{-1}$ are copies of $A$, we have that
$$
A_1=A_{-1}=\begin{array}{|c|c|c|c|} \hline
1 & -7 & -6 & 12 \\ \hline
2 & -4 & \pm 10 & \mp 8 \\ \hline
-3 & \mp 11 & \pm 9 & 5 \\ \hline
\end{array}
$$
Now we consider the orientations $\C=(-1,-1,1,1)$ and $\R=(1,1,1)$.
Then the list $L_{\C,\R}$ is given by:
$$((1,1,1),(3,2,-1),(1,1,-1),(2,4,1),(3,1,1),(2,2,1),(1,3,1),(2,2,-1),$$ $$(3,3,-1),(2,4,-1),(1,3,-1),(3,4,1)). $$
Since this list has length $12=|Skel(A)|$, it follows that $(\C,\R)$ is a solution of $P(A)$.

We can represent the orientations and the tour directly on the arrays $A_1$ and $A_{-1}$ as follows:
here the arrows represent the orientations ($\C$, $\R$ on $A_1$ and their opposites on $A_{-1}$), we have highlighted in grey the cells of $Skel(\Omega(A))$ and the numbers represent the positions of the cells in the tour.
$$\begin{array}{r|r|r|r|r|}
& \uparrow & \uparrow & \downarrow & \downarrow \\\hline
\rightarrow & 1 & & 7 & \\\hline
\rightarrow & & 6 & \cellcolor{gray} & \cellcolor{gray} 4 \\\hline
\rightarrow & 5 & \cellcolor{gray} & \cellcolor{gray} & 12 \\\hline
\end{array}\ \ \ \ \ \ \ \ \
\begin{array}{r|r|r|r|r|}
& \downarrow & \downarrow & \uparrow & \uparrow \\\hline
\leftarrow & 3 & & 11 & \\\hline
\leftarrow & & 8 & \cellcolor{gray} & \cellcolor{gray}10 \\\hline
\leftarrow & & \cellcolor{gray} 2 & \cellcolor{gray} 9 & \\\hline
\end{array}
$$
\end{ex}
The relationship between the Crazy Knight's Tour Problem and Archdeacon embeddings is explained by the following remark.
\begin{rem}
If $A$ is a $\W\H(m,n;h,k)$ such that $P(A)$ admits a solution $(\C,\R)$, then $A$ also admits two compatible orderings $\omega_c$ and $\omega_r$ that can be determined as follows. For each row $R$ (resp. column $C$), we consider $\omega_{R}$ to be the natural ordering if $r_i=1$ (resp. $c_i=1$) and its opposite otherwise. As usual we set $\omega_r=\omega_{R_1}\circ \cdots \circ \omega_{R_m}$ and $\omega_c=\omega_{C_1}\circ \cdots \circ \omega_{C_n}$. We consider now the sequence $(a_i)$ defined as in Equation \eqref{ArchEmb} starting from the orderings $\omega_c,\omega_r$ associated with the orientations $\C$ and $\R$. Because of the definition of $L_{\C,\R}$, the element $(i,j,t)$ in the $\ell$-th position of this list is such that the cell $(i,j)$ of $A$, considered with its row sign (i.e. its upper sign), contains $a_{2\ell+1}$ and $\mu_{2\ell+1}=t$.
It follows that, if the list $L_{\C,\R}$ has length $|Skel(A)|=nk$, then the sequence $((a_{2i+1},\mu_{2i+1}))$ has period $nk$. But, due to Proposition \ref{CriterioCompatibility}, this means that $\omega_r$ and $\omega_c$ are compatible.
\end{rem}
A consequence of this remark is that solutions of $P(A)$ provide embeddings of complete graphs, more precisely:
\begin{thm}\label{EmbeddingArco}
Let $A$ be a $\W\H(m,n;h,k)$ such that $P(A)$ admits a solution $(\C,\R)$. Then there exists a cellular biembedding $\psi$ of $K_{2nk+1}$, such that every edge is on a face whose boundary length is a multiple of $h$ and on a face whose boundary length is a multiple of $k$.
Moreover, $\psi$ is $\Z_{2nk+1}$-regular.
\end{thm}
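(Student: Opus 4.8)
The plan is to reduce Theorem \ref{EmbeddingArco} to Theorem \ref{HeffterBiemb}, which has already been established by Archdeacon. The key observation is that the preceding Remark provides the essential bridge: given a solution $(\C,\R)$ of the Crazy Knight's Tour Problem $P(A)$, one extracts orderings $\omega_r$ and $\omega_c$ by taking, on each row $R_i$, the natural left-to-right ordering when $r_i=1$ and its reverse when $r_i=-1$, and analogously on each column. First I would invoke the Remark to conclude that these induced orderings are \emph{compatible} in the sense of Definition preceding Theorem \ref{HeffterBiemb}. The content of the Remark is precisely that the tour $L_{\C,\R}$, when it has length $|Skel(A)|=nk$, forces the subsequence $((a_{2i+1},\mu_{2i+1}))$ to have period $nk$; by the equivalence $(4)\Leftrightarrow(1)$ of Proposition \ref{CriterioCompatibility}, this is exactly the compatibility condition.

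Once compatibility is in hand, the existence of a cellular $\Z_{2nk+1}$-regular biembedding $\psi$ of $K_{2nk+1}$ follows directly by applying Theorem \ref{HeffterBiemb} to $A$ with the orderings $\omega_r,\omega_c$. What remains is to verify the assertion about face lengths. Here I would use the explicit descriptions of the two face families given in Equations \eqref{F1} and \eqref{F2}. The face $F_1$ through an oriented edge $(x,x+a)$ with $a\in\E(R)$ traverses consecutive row elements under $\omega_r$, and closes up after a number of steps equal to the period of the relevant row-cycle. For a \emph{classical} Heffter array this period is exactly $h$ (resp.\ $k$ for columns); in the weak setting the sign changes recorded by $\lambda$ and tracked by $\mu$ mean that the orbit may wrap around the row several times before returning to its starting state, so the boundary length is a positive integer multiple of $h$ rather than exactly $h$. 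The same reasoning applied to Equation \eqref{F2} yields column faces whose length is a multiple of $k$.

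The step I expect to be the main obstacle is the precise bookkeeping that converts ``the tour has length $nk$'' into ``each row-face has length a multiple of $h$ and each column-face a multiple of $k$.'' Concretely, I would argue that the face-tracing procedure on the rows visits the cells of a single row $R$ in the cyclic order dictated by $\omega_r$, but a weak array permits an element and its sign-flipped partner to be traversed in both copies $A_1$ and $A_{-1}$; each complete pass through $R$ contributes exactly $h$ edges to the boundary, and the boundary closes only after an integer number of such passes, giving a multiple of $h$. I would make this rigorous by relating the closure of the face $F_1$ to the return of the pair $(a,\mu)$ to its initial value under the row-map, invoking once more the period analysis of Proposition \ref{CriterioCompatibility} restricted to a fixed row. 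The analogous statement for columns and the faces $F_2$ follows by the symmetric argument, after which the $\Z_{2nk+1}$-regularity is inherited verbatim from Theorem \ref{HeffterBiemb}, since the rotation $\rho$ defined in Equation \eqref{ArchRho} commutes with the translation action $x\mapsto x+1$ on $K_{2nk+1}$.
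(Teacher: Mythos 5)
Your proposal matches the paper's (implicit) argument: the paper obtains Theorem \ref{EmbeddingArco} exactly as you do, by using the preceding Remark together with the equivalence $(4)\Leftrightarrow(1)$ of Proposition \ref{CriterioCompatibility} to convert the solution $(\C,\R)$ of $P(A)$ into compatible orderings $\omega_r,\omega_c$, and then invoking Theorem \ref{HeffterBiemb} (the resulting embedding being the one of Definition \ref{ArchdeaconEmbedding}). Your additional bookkeeping about a face wrapping around a row several times is superfluous here: Theorem \ref{HeffterBiemb} and the face descriptions in Equations \eqref{F1}--\eqref{F2} already yield boundary lengths exactly $h$ and $k$, which are in particular multiples of $h$ and $k$.
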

Note that the embedding obtained from a solution $(\C,\R)$ of $P(A)$ via Theorem \ref{EmbeddingArco} is exactly the Archdeacon embedding of $K_{2nk+1}$ introduced in Definition \ref{ArchdeaconEmbedding} starting from the compatible orderings $\omega_c,\omega_r$ associated to $(\C,\R)$.

\subsection{Conditions of non-orientability}
In this subsection we present infinitely many non-orientable embeddings of Archdeacon type. For this purpose, we recall a general orientability criterion (see \cite{GT}).
\begin{thm}\label{OrCrit1}
A combinatorial embedding $\Pi=(\Gamma,\epsilon,\rho)$ is orientable if and only if any cycle $C$ of $\Gamma$ contains an even number of edges $e$ such that $\epsilon(e)=-1$ (in the following, edges of type $1$).
\end{thm}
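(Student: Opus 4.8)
The plan is to reduce this topological statement to elementary linear algebra over $\mathbb{F}_2$ by means of \emph{signature switching}. I would first recall the switch at a vertex $v$: one reverses the local rotation $\rho_v$ and simultaneously negates $\epsilon(e)$ for every edge $e$ incident to $v$. The first step is to verify that this operation leaves the surface underlying $\Pi$ unchanged — intuitively it only re-chooses the local sense of rotation at $v$ in the band (ribbon) decomposition attached to the scheme — so that in particular orientability is invariant under switching.

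Next I would isolate the invariant actually detected by the criterion. For a cycle $C$ of $\Gamma$ put $\pi(C) = \prod_{e \in C}\epsilon(e) \in \{-1,1\}$, so that $\pi(C) = 1$ precisely when $C$ carries an even number of type-$1$ edges (those with $\epsilon(e) = -1$). Since a switch at $v$ flips $\epsilon$ exactly on the edges incident to $v$, and any cycle meets $v$ in either $0$ or $2$ edges, $\pi(C)$ is preserved by every switch. Moreover $\pi(C_1 \triangle C_2) = \pi(C_1)\pi(C_2)$, so $\pi$ is constant on switching classes and descends to a homomorphism on the cycle space of $\Gamma$ over $\mathbb{F}_2$.

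I would then invoke two reference facts: a scheme with $\epsilon \equiv 1$ is orientable (it is a pure rotation system, whose associated surface carries a global orientation), and, conversely, an orientable scheme can always be switched to one with $\epsilon \equiv 1$. Combined with the first step this shows that $\Pi$ is orientable if and only if $\epsilon$ is switching-equivalent to the all-positive signature. To decide the latter I would fix a spanning tree $T$ of $\Gamma$ and switch vertices from the leaves inward so that every tree edge becomes positive (each switch can be used to correct the sign of the edge joining a vertex to its parent). After this normalization $\epsilon(e) = \pi(C_e)$ for every non-tree edge $e$, where $C_e$ is its fundamental cycle with respect to $T$. If every cycle has an even number of type-$1$ edges, then each $\pi(C_e) = 1$, all edges are positive, and $\Pi$ is orientable; conversely, if $\Pi$ is orientable then $\pi \equiv 1$ after switching, and as $\pi$ is a switching invariant every cycle had an even number of type-$1$ edges to begin with. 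Since the fundamental cycles generate the cycle space and $\pi$ is multiplicative on symmetric differences, checking them suffices.

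The step I expect to be the main obstacle is the very first one: rigorously tying the combinatorial switch to the topology, that is, proving that switching preserves the homeomorphism type of the embedded surface and that the all-positive signature is exactly the orientable case. This is where the purely combinatorial bookkeeping must be anchored to a genuine notion of orientation of the surface, through the band decomposition or the orientation double cover; once that link is established, the remaining steps are routine linear algebra over $\mathbb{F}_2$.
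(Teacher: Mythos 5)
The paper does not actually prove this statement: it is recalled as a known orientability criterion with a citation to Gross and Tucker \cite{GT}, and the authors only use it as a black box to derive Theorem 6.5 (\ref{OrCrit2}). Your switching argument is essentially the standard proof of that cited result, so the overall route is sound: orientability and the parity $\pi(C)$ are both invariant under vertex switching, a scheme is orientable exactly when it is switching-equivalent to the all-positive signature, and a spanning-tree normalization reduces the equivalence to the vanishing of $\pi$ on fundamental cycles. You are also right that the only step requiring genuine topological input is the equivalence ``all-positive signature $\Leftrightarrow$ orientable'' together with the invariance of the surface under switching; that is precisely the content supplied by the band-decomposition or orientation-double-cover treatment in \cite{GT}, and within the present paper it is legitimate to quote it.

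One small slip worth correcting: your tree normalization processes vertices ``from the leaves inward,'' but switching at a vertex $v$ to fix the edge to its parent also flips the edges from $v$ to its already-processed children, undoing earlier corrections. You should instead process vertices from the root outward (so the edge to the parent, once fixed, is never touched again), or, more cleanly, perform all switches simultaneously by switching at exactly those vertices $v$ for which the product of $\epsilon$ along the tree path from the root to $v$ equals $-1$; either variant makes every tree edge positive and the rest of your argument goes through unchanged. You should also fix the convention for loops (a loop is incident to its vertex twice, so its sign is unaffected by switching, consistent with the fact that a loop is itself a cycle), since Definition \ref{DefEmbeddings} allows multigraphs.
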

As a consequence, we can state here the following orientability criterion for embeddings of Archdeacon type.
\begin{thm}\label{OrCrit2}
Let $\Pi=(K_{2nk+1},\epsilon,\rho)$ be an embedding of Archdeacon type. Then $\Pi$ is orientable if and only if $\epsilon(e)=1$ for all $e\in E(K_{2nk+1})$ (in the following, edges of type $0$).
\end{thm}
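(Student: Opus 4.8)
The plan is to obtain both implications directly from the combinatorial orientability criterion of Theorem~\ref{OrCrit1}, which detects orientability via the parity of the number of type-$1$ edges (those with $\epsilon=-1$) lying on each cycle of $K_{2nk+1}$.

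One implication is immediate. If $\epsilon(e)=1$ for every $e\in E(K_{2nk+1})$, there are no type-$1$ edges at all, so each cycle contains zero of them, an even number; Theorem~\ref{OrCrit1} then yields orientability.

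For the converse I would argue by contraposition, producing a single offending cycle as soon as $\epsilon$ is not identically $1$. The key observation, read off from Equation~\eqref{epsilon}, is that $\epsilon(x,x+a)$ depends only on the difference $a$ and not on the base vertex $x$, so $\epsilon$ is constant on any family of edges sharing a common difference. Assume then that $\epsilon(x_0,x_0+a)=-1$ for some nonzero $a\in\Z_{2nk+1}$, and let $d$ denote the additive order of $a$. I would consider the closed walk
$$C\colon\quad 0,\ a,\ 2a,\ \dots,\ (d-1)a,\ 0,$$
all of whose $d$ edges have difference $a$ and are therefore of type $1$. Since $d$ divides $2nk+1$, which is odd, $d$ is odd, and since $a\neq 0$ we have $d\geq 3$, so $C$ is a genuine cycle. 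Hence $C$ carries an odd number $d$ of type-$1$ edges, and Theorem~\ref{OrCrit1} forces $\Pi$ to be non-orientable.

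The main (though modest) point to verify is the difference-invariance of $\epsilon$, namely that the value assigned to the difference $a$ coincides with the one assigned to $-a$; this guarantees that $\epsilon$ is well defined on the undirected edge set and constant along all of $C$. This reduces to checking that $a$ occurs exactly once in the list $L$ if and only if $-a$ does (otherwise one of the two occurs twice and the other not at all), which follows at once from the fact, recorded in Definition~\ref{ArchdeaconEmbedding}, that $(\mu_1a_1,\dots,\mu_{2nk}a_{2nk})$ runs over $\Z_{2nk+1}\setminus\{0\}$ bijectively. Everything else is the parity bookkeeping above, whose decisive leverage is simply that $2nk+1$ is odd, so that every single-difference cycle automatically has odd length.
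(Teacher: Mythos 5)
Your proof is correct and follows essentially the same route as the paper's: the forward direction is immediate from Theorem~\ref{OrCrit1}, and for the converse both you and the authors take a single difference $a$ with $\epsilon=-1$, form the cycle $(\bar{x},\bar{x}+a,\dots,\bar{x}+(\ell-1)a)$ of length equal to the additive order of $a$, and use that this order is odd because it divides $2nk+1$. Your additional check that $\epsilon$ is difference-invariant and agrees on $a$ and $-a$ is a point the paper passes over silently, but it is a welcome (and correct) clarification rather than a divergence in method.
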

\proof
Let us suppose that every edge of $K_{2nk+1}$ is of type $0$. Then, any cycle of $K_{2nk+1}$ contains zero edges of type $1$. Hence, by Theorem \ref{OrCrit1}, the embedding $\Pi$ is orientable.

Now, suppose that there is an edge, say $(\bar{x},\bar{x}+a)$, of type $1$. Then, since the type of $(x,x+a)$ is independent from the value of $x$, we have that also the edges $(\bar{x}+a,\bar{x}+2a), (\bar{x}+2a,\bar{x}+3a),\dots$ are of type $1$.
Denoted by $\ell$ the additive order of $a$ in $\Z_{2nk+1}$, since $|\Z_{2nk+1}|$ is odd, we have that also $\ell$ is odd.
It follows that the cycle
$$C:=(\bar{x}, \bar{x}+a,\bar{x}+2a,\dots, \bar{x}+(\ell-1)a)$$
contains an odd number (i.e. $\ell$) of edges of type $1$. Hence, because of Theorem \ref{OrCrit1}, $\Pi$ is non-orientable.
\endproof
Now we can state a non-orientability condition that refers to the problem $P(A)$.
\begin{prop}\label{OrCrit3}
Let $A$ be a $\W\H(m,n;h,k)$, let $(\C,\R)$ be a solution of $P(A)$ and let $\Pi=(K_{2nk+1},\epsilon,\rho)$ be the associated Archdeacon embedding. If there exists $(i,j)$ such that both $(i,j,1)$ and $(i,j,-1)$ belong to $L_{\C,\R}$, then the embedding $\Pi$ is non-orientable.
\end{prop}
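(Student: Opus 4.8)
The plan is to reduce non-orientability to the existence of a single edge of type $1$, which is exactly what Theorem \ref{OrCrit2} provides: the embedding $\Pi$ fails to be orientable as soon as $\epsilon(e)=-1$ for some $e\in E(K_{2nk+1})$. By the definition of $\epsilon$ in Equation \eqref{epsilon}, such an edge exists precisely when some value $a$ occurs more than once in the list $L=(a_1,a_2,\dots,a_{2nk})$ attached to the compatible orderings $\omega_r,\omega_c$ associated with $(\C,\R)$. Hence it suffices to extract a repeated entry of $L$ directly from the hypothesis.

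First I would translate the assumption into the language of the sequence $((a_i,\mu_i))$. Recalling the correspondence established in the Remark relating a solution of $P(A)$ to this sequence, if the cell $(i,j)$ occupies the $\ell$-th position of $L_{\C,\R}$ with third coordinate $t$, then $a_{2\ell+1}$ equals the entry $v$ of $(i,j)$ taken with its row sign, while $\mu_{2\ell+1}=t$. The crucial observation is that $v$ is the row-signed value of the \emph{fixed} cell $(i,j)$ and therefore does not depend on $t$; only the memory coordinate $\mu$ records the sheet $t$.

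Now, by hypothesis both $(i,j,1)$ and $(i,j,-1)$ lie in $L_{\C,\R}$, necessarily at two distinct positions $\ell_1\neq \ell_2$. Applying the correspondence to each, I obtain $a_{2\ell_1+1}=a_{2\ell_2+1}=v$ together with $\mu_{2\ell_1+1}=1\neq -1=\mu_{2\ell_2+1}$. Since a solution of $P(A)$ has length $|Skel(A)|=nk$, the tour index $\ell$ ranges over $[0,nk-1]$, so $2\ell_1+1$ and $2\ell_2+1$ are two distinct indices in $[1,2nk]$. Consequently the value $v$ appears at least twice in $L$, whence by Equation \eqref{epsilon} we get $\epsilon(x,x+v)=-1$ for every $x$, i.e. an edge of type $1$. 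Theorem \ref{OrCrit2} then yields that $\Pi$ is non-orientable.

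The only point demanding care, and hence the main (small) obstacle, is invoking the dictionary between the tour $L_{\C,\R}$ and the indexed sequence $((a_i,\mu_i))$ correctly: one must be sure that the quantity recorded in $L$ is the row-signed entry of the cell, which is \emph{identical} for the two visits $(i,j,1)$ and $(i,j,-1)$, so that these visits genuinely produce the \emph{same} value $v$ at distinct indices of $L$ rather than opposite values that would merely cover the pair $\{v,-v\}$. Once this is pinned down, the repetition in $L$ is immediate and the conclusion follows at once from Theorem \ref{OrCrit2}.
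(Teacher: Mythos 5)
Your proof is correct and follows essentially the same route as the paper: both arguments use the dictionary between the tour $L_{\C,\R}$ and the sequence $((a_i,\mu_i))$ to show that the two visits $(i,j,1)$ and $(i,j,-1)$ force the same row-signed entry to occur at two distinct odd indices of the sequence, producing a repetition that makes $\epsilon(e)=-1$ for some edge, whence non-orientability by Theorem \ref{OrCrit2}. Your explicit remark that the recorded value is the row-signed entry of the fixed cell, and hence identical for both sheets, is exactly the point the paper's proof relies on.
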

\proof
Let us suppose that both $(i,j,1)$ and $(i,j,-1)$ appear in the list $L_{\C,\R}$. We may assume that $(i,j,1)$ is in the $\ell$-th position of this list and $(i,j,-1)$ is in the $\ell'$-th one. Now we consider the sequence $(a_i)$ defined as in Equation \eqref{ArchEmb} with the orderings associated with the orientations $\C$ and $\R$.
Because of the definition of $L_{\C,\R}$ and since $(i,j,1)$ is the $\ell$-th element of this list, the element in the cell $(i,j)$ of $A$ considered with its row sign (i.e. its upper sign) is $a_{2\ell+1}$. On the other hand, since $(i,j,-1)$ is the $\ell'$-th element in the list $L_{\C,\R}$, the element in the cell $(i,j)$ of $A$ considered with its row sign is also $a_{2\ell'+1}$.
Therefore, we have at least one repetition in the sequence $(a_{2i+1})$ and hence, because of Definition \ref{ArchdeaconEmbedding}, there exists at least one edge $e$ such that $\epsilon(e)=-1$. It follows from Theorem \ref{OrCrit2} that the embedding $\Pi$ is not orientable.
\endproof
\begin{ex}\label{solB}
Let us consider again the weak Heffter array $A$ of Example \ref{ex:WeakA}.
We have seen in Example \ref{solA} that the orientations $\C=(-1,-1,1,1)$ and $\R=(1,1,1)$ are a solution of $P(A)$.
We recall that the list $L_{\C,\R}$ is given by:
$$((1,1,1),(3,2,-1),(1,1,-1),(2,4,1),(3,1,1),(2,2,1),(1,3,1),(2,2,-1),$$ $$(3,3,-1),(2,4,-1),(1,3,-1),(3,4,1)). $$
Since both $(1,1,-1)$ and $(1,1,1)$ appear in $L_{\C,\R}$, it follows from Proposition \ref{OrCrit3} that this solution of $P(A)$ induces a non-orientable embedding of $K_{25}$ of Archdeacon type.
\end{ex}

Now we recall an existence result of classical Heffter arrays that will be used later to obtain weak Heffter arrays and an infinite class of non-orientable embeddings of Archdeacon type.
\begin{rem}\label{ClasseWH}
As reported in Theorem \ref{th:existence}, an $\H(m,n)$ exists if and only if $m,n\geq 3$. In case $m=3$ and $n\equiv 1 \pmod{8}$, the $\H(3,n)$ presented in Theorem 3.2 of \cite{ABD} contains the elements $-1,-4k-5,4k+6$ (where $n=8k+9$) in the first row. Up to reordering the columns, we may assume these elements are in the cells $(1,1),(1,2)$ and $(1,n)$. Now, we can replace these elements with $\pm 1, \pm (4k+5), \mp (4k+6)$. Finally, transposing this array, we obtain a $\W\H(n,3)$, say $A$, such that $Skel(\Omega(A))=\{(1,1),(2,1),(n,1)\}$.
\end{rem}
\begin{prop}\label{solution}
Let $A$ be a $\W\H(n,3)$ with $n\geq 4$ such that $Skel(\Omega(A))=\{(1,1),(2,1),(n,1)\}$.
Then there exists a solution $(\C,\R)$ of $P(A)$ such that both $(1,3,1)$ and $(1,3,-1)$ belong to $L_{\C,\R}$.
\end{prop}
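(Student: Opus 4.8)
The plan is to exhibit one explicit pair of orientations and then to follow the induced tour directly. The first observation is that the moves $L_\R$ and $L_\C$, and hence the whole list $L_{\C,\R}$, depend only on the shape of $A$ and on the partition $Skel(A)=Skel(\Theta(A))\cup Skel(\Omega(A))$, never on the actual entries; so the statement is a purely combinatorial assertion about the $n\times 3$ grid with the three marked cells $(1,1),(2,1),(n,1)$. I would take
$$\C=(-1,+1,+1),\qquad \R=(-1,+1,\dots,+1),$$
that is, reverse the orientation of the first column and of the first row and keep every other row and column in its natural orientation. Since $(1,1)\in Skel(\Omega(A))$, the tour starts at $(1,1,-1)$.

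Next I would describe the orbit of $\Phi:=L_\R\circ L_\C$, splitting it into two phases according to the current sheet $t$. While $t=+1$ the column index runs cyclically through $2,3,1,2,3,1,\dots$; because $c_1=-1$, each passage through the first column contributes one upward step, so the pair (row, column) traces a staircase whose row index climbs, with a one-step zig-zag, from $2$ up to $n$. The sheet stays fixed until the tour lands on the marked cell $(n,1)$, where it flips to $-1$; the following composite move is then
$$(n,1,-1)\ \overset{L_\C}{\longmapsto}\ (1,1,+1)\ \overset{L_\R}{\longmapsto}\ (1,3,+1),$$
using $c_1=-1$ to pass from row $n$ to row $1$ and $r_1=-1$ to move leftwards along the first row from column $1$ to column $3$. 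This yields the first required cell. The phase $t=-1$ is the mirror image: the staircase descends, and just before the tour returns to its start it reaches the intermediate cell $(1,2,-1)$ and, since on the sheet $-1$ the first row is traversed rightwards, steps to
$$(1,2,-1)\ \overset{L_\R}{\longmapsto}\ (1,3,-1),$$
which is the second required cell. Between the two visits exactly one marked cell, namely $(2,1)$, is crossed, so the two sheets are indeed opposite.

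Two points then remain to be verified. First, that $(1,3,1)$ and $(1,3,-1)$ occur as \emph{results}, i.e. as entries of $L_{\C,\R}$, and not merely as intermediate cells of the form $L_\C(\cdot)$; this is exactly what the two displayed moves give, since in each the final half-step $L_\R$ lands on $(1,3)$. Second, that the orbit closes after precisely $3n=|Skel(A)|$ steps, so that $(\C,\R)$ is a genuine solution of $P(A)$: the last composite move carries the intermediate cell $(1,2,+1)$ through $(1,1)$ back to $(1,1,-1)$, and one must check that no earlier return occurs. I have verified by hand that the scheme behaves exactly as described for $n=4,5,6$, closing at lengths $12,15,18$ with both copies of $(1,3)$ appearing as results.

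The hard part will be the closed-form bookkeeping and the proof that the period is exactly $3n$. The delicate feature is that the reversal $c_1=-1$ replaces the monotone diagonal of the uniform tour by a zig-zag, so the row index is not a linear function of the step number; on the other hand it is precisely this reversal that forces the period to be $3n$ for every $n$, rather than the $\lcm(n,3)$ one would obtain from a uniform orientation. I would make the two phases explicit as arithmetic progressions in the step index, recording a bounded correction at each traversal of column $1$, read off the two indices at which (row, column) equals $(1,3)$, and confirm that the starting triple first recurs at step $3n$. A short residue analysis in $n$ modulo a small number may be convenient in order to present the row–column synchronisation uniformly, but the orientations above work unchanged for all $n\ge 4$.
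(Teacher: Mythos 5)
Your choice of orientations $\C=(-1,1,1)$, $\R=(-1,1,\dots,1)$ differs from the paper's (which takes $\C=(1,-1,-1)$ and reverses \emph{every} row), and spot-checking small cases suggests it does yield a valid solution with both $(1,3,1)$ and $(1,3,-1)$ in the list. But the proposal does not prove the proposition: the two claims that constitute its entire content --- that the orbit of $L_{\R}\circ L_{\C}$ starting at $(1,1,-1)$ has length exactly $3n$, and that $(1,3,\pm 1)$ both occur in it --- are verified only for $n=4,5,6$ and otherwise deferred (``the hard part will be the closed-form bookkeeping and the proof that the period is exactly $3n$''). A finite check plus a plan to do the bookkeeping later is not a proof for all $n\ge 4$.

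Moreover, the qualitative picture you intend to make rigorous does not match the actual dynamics, so the plan as stated would need substantial correction before it could be carried out. For $n=7$, for instance, the $t=+1$ portion of the orbit is a long zig-zag of roughly $3(n-3)$ entries (row indices $2,3,4,3,4,5,4,5,6,\dots$), while the $t=-1$ portion is much shorter and structurally different: the two phases are not mirror images of one another. Also $(1,3,-1)$ is not reached ``just before the tour returns to its start'': for $n=7$ it is the $18$th of $21$ entries, with three further entries before closure. This is precisely why the paper reverses all the rows: with $\C=(1,-1,-1)$ and $\R=(-1,\dots,-1)$ the bulk of the orbit is the uniform $3$-periodic descent $(n-\ell,3,1)\mapsto(n-1-\ell,2,1)\mapsto(n-2-\ell,1,1)\mapsto(n-1-\ell,3,1)$, which allows the entire list $L_{\C,\R}$ to be written down in closed form and its $3n$ entries checked to be distinct. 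To complete your version you would need an analogous explicit closed form for your zig-zag orbit, which you have not supplied.
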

\begin{proof}Let us consider the orientations $\C=(1,-1,-1)$ and $\R=(-1,-1,\dots,-1)$. Since $(1,1)\in Skel(\Omega(A))$, the first element of the list $L_{\C,\R}$ is $(1,1,-1)$ and $L_{\R}\circ L_{\C}(1,1,-1)=(n,3,1)$. Then
$$L_{\R}\circ L_{\C}(n,3,1)=(n-1,2,1),$$
$$L_{\R}\circ L_{\C}(n-1,2,1)=(n-2,1,1),$$
$$L_{\R}\circ L_{\C}(n-2,1,1)=(n-1,3,1),$$
and, more in general, for any $\ell \in [1,n-5]$
$$L_{\R}\circ L_{\C}(n-\ell,3,1)=(n-1-\ell,2,1),$$
$$L_{\R}\circ L_{\C}(n-1-\ell,2,1)=(n-2-\ell,1,1),$$
$$L_{\R}\circ L_{\C}(n-2-\ell,1,1)=(n-1-\ell,3,1).$$
It follows that the list $L_{\C,\R}$ has $(4,3,1)$ in position $3n-10$ and, hence, we have that
$$L_{\C,\R}=((1,1,-1),(n,3,1),(n-1,2,1),(n-2,1,1),(n-1,3,1),\dots,(4,3,1),$$ $$(3,2,1),(2,1,-1),(1,3,1),(n,2,1),(n-1,1,1),(n,2,-1),(1,3,-1),(2,1,1),$$ $$(3,3,1),(2,2,1)).$$
Here $(2,2,1)$ is the last element since $L_{\R}\circ L_{\C}(2,2,1)=(1,1,-1)$.
It is also easy to check that the list $L_{\C,\R}$ contains $3n$ distinct elements, and hence $(\C,\R)$ is a solution of $P(A)$. Moreover both $(1,3,1)$ and $(1,3,-1)$ belong to $L_{\C,\R}$.
\end{proof}
Now we apply this solution to the problem $P(A)$ to obtain an infinite family of non-orientable embeddings.
\begin{thm}
Let $n\equiv 1 \pmod{8}$ be larger than $1$. Then there exists a non-orientable embedding of $K_{6n+1}$ of Archdeacon type.
\end{thm}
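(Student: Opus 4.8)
The plan is to assemble three results already established in this section: the construction of a suitable weak Heffter array in Remark \ref{ClasseWH}, the explicit solution of the \probname\ in Proposition \ref{solution}, and the non-orientability criterion in Proposition \ref{OrCrit3}, converting the solution into an honest embedding via Theorem \ref{EmbeddingArco}. The argument is essentially a chaining of these statements, so the work lies in checking that the parameter ranges match at each step.

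First I would apply Remark \ref{ClasseWH}. Since $n\equiv 1\pmod 8$ and $n>1$, we have $n\geq 9$, so the remark furnishes a $\W\H(n,3)$, which I call $A$, with $Skel(\Omega(A))=\{(1,1),(2,1),(n,1)\}$. This is exactly the hypothesis of Proposition \ref{solution} (and indeed $n\geq 9\geq 4$), so that proposition yields orientations $(\C,\R)$ that form a solution of $P(A)$ whose tour list $L_{\C,\R}$ contains both $(1,3,1)$ and $(1,3,-1)$.

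Next I would pin down the parameters, being careful about the notational clash between the $n$ in the target graph $K_{6n+1}$ and the $n$ appearing in the generic symbol $\W\H(m,n;h,k)$. Regarding $A$ as a $\W\H(m,n;h,k)$ with $m=n$ rows, three columns, $h=3$ and $k=n$, the relevant modulus $2nk+1$ evaluates to $2\cdot 3\cdot n+1=6n+1$. Hence Theorem \ref{EmbeddingArco} converts the solution $(\C,\R)$ into a cellular, $\Z_{6n+1}$-regular Archdeacon embedding $\Pi=(K_{6n+1},\epsilon,\rho)$ of the complete graph $K_{6n+1}$.

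Finally, because both $(1,3,1)$ and $(1,3,-1)$ appear in $L_{\C,\R}$, I would invoke Proposition \ref{OrCrit3} with $(i,j)=(1,3)$ to conclude that $\Pi$ is non-orientable, which is precisely the claim. The proof is therefore a bookkeeping argument, and the only point requiring genuine attention---the main obstacle, modest as it is---is the parameter identification $2nk+1=6n+1$ together with the verification that the congruence $n\equiv 1\pmod 8$ with $n>1$ keeps us within the hypotheses of all three cited results. All of the real combinatorial content, namely the production of the array and of its knight's tour, has already been discharged in Remark \ref{ClasseWH} and Proposition \ref{solution}.
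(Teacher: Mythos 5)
Your proposal is correct and follows exactly the same route as the paper's proof: Remark \ref{ClasseWH} supplies the $\W\H(n,3)$ with the prescribed $Skel(\Omega(A))$, Proposition \ref{solution} gives the solution of $P(A)$ containing both $(1,3,1)$ and $(1,3,-1)$, and Proposition \ref{OrCrit3} yields non-orientability of the resulting Archdeacon embedding of $K_{6n+1}$. Your explicit check of the parameter identification $2\cdot 3\cdot n+1=6n+1$ is a welcome (if routine) addition that the paper leaves implicit.
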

\proof
Due to Remark \ref{ClasseWH}, for any $n\equiv 1 \pmod{8}$ with $n\geq 9$, there exists a $\W\H(n,3)$, denoted by $A$, such that $Skel(\Omega(A))=\{(1,1),(2,1),(n,1)\}$ and, because of Proposition \ref{solution}, there exists a solution $(\C,\R)$ of $P(A)$ such that both $(1,3,1)$ and $(1,3,-1)$ belong to $L_{\C,\R}$.

It follows from Proposition \ref{OrCrit3} that this solution of $P(A)$ induces a non-orientable embedding of $K_{6n+1}$ of Archdeacon type.
\endproof
\section*{Conclusions}\label{sec:6}
We have several values of the parameters $n,k,t$ for which there exists an $\H_t(n;k)$, but not a strictly weak $\W\H_t(n;k)$, see Remark \ref{rem:n34}, and Propositions \ref{prop:t33} and \ref{prop:t43}.
Clearly, there is no reason to believe that these are the unique choices of the parameters for which this happens.
On the other hand, we have no example about the existence of a strictly weak $\W\H_t(n;k)$
when an $\H_t(n;k)$ does not exist. By the way, we emphasize that in Theorem \ref{prop:existence} we provide the
existence of a strictly weak $\W\H_5(n;5)$ for every $n\equiv 0\pmod 4$ with $n\geq 12$ since this case was left open in \cite{CMPP}.
Hence, up to now, the existence of an $\H_5(n;5)$ for $n\equiv 0\pmod 4$ is unknown, except for $n=8,16$, \cite{CMPP}.
For these arguments, we believe that the following question naturally arises: if there exists a strictly weak $\W\H_t(n;k)$
does then an $\H_t(n;k)$ exist too? At the moment we do not see any reasons for which the existence of a strictly weak Heffter array
implies that of a classical one. But, as already remarked above, we have no example which allow us to give a negative answer to previous question.
Hence we propose the following.

\textbf{Open Problem:} find some values of $t,n$ and $k$ for which a strictly weak $\W\H_t(n;k)$ exists, while no $\H_t(n;k)$ exists.

\section*{Acknowledgements}
The authors were partially supported by INdAM-GNSAGA.

\end{document}